\numberwithin{equation}{section}
\numberwithin{figure}{section}
\theoremstyle{plain}
\newtheorem{thm}{\protect\theoremname}
  \theoremstyle{definition}
  \newtheorem{defn}[thm]{\protect\definitionname}
  \theoremstyle{plain}
  \newtheorem{prop}[thm]{\protect\propositionname}
  \theoremstyle{plain}
  \newtheorem{cor}[thm]{\protect\corollaryname}
  \theoremstyle{plain}
  \newtheorem{lem}[thm]{\protect\lemmaname}
\DeclareMathOperator*{\argmax}{arg\,max}
\DeclareMathOperator*{\argmin}{arg\,min}
  \providecommand{\corollaryname}{Corollary}
  \providecommand{\definitionname}{Definition}
  \providecommand{\lemmaname}{Lemma}
  \providecommand{\propositionname}{Proposition}
\providecommand{\theoremname}{Theorem}
\begin{document}

\title{Expected Regularized Total Variation of Brownian Motion}

\author{Alexander Dunlap}

\address{Department of Mathematics, Stanford University, Stanford, CA 94305}

\email{ajdunl2@math.stanford.edu}

\date{\today}
\begin{abstract}
We introduce a notion of regularized total variation on an interval
for continuous functions with unbounded variation. The definition
of regularized total variation is obtained from that of total variation
by subtracting a penalty for the size of the partition used to estimate
the variation. We present an explicit construction of a partition
achieving the regularized total variation, and use this construction
to estimate the expected regularized total variation of Brownian motion
on an interval.
\end{abstract}

\maketitle

\section{Introduction}

For a continuous function $f:[a,b]\to\mathbf{R}$, it is standard
to define the total variation
\begin{equation}
TV_{[a,b]}(f)=\sup_{k\ge0}TV_{[a,b],k}(f),\label{eq:tv}
\end{equation}
where
\[
TV_{[a,b],k}(f)=\max_{a=t_{0}<t_{1}<\cdots<t_{k+1}=b}\sum_{i=1}^{k+1}|f(t_{i})-f(t_{i-1})|.
\]
Note that $TV_{[a,b],k}(f)$ is an increasing function of $k$, so
we actually have
\[
TV_{[a,b]}(f)=\lim_{k\to\infty}TV_{[a,b],k}(f).
\]
If $TV(f)<\infty$, then one says that $f$ is of \emph{bounded variation},
which is to say that one cannot make $TV_{[a,b],k}(f)$ arbitrarily
large by increasing $k$. Conversely, if $f$ has unbounded variation,
then $TV_{[a,b],k}(f)$ grows arbitrarily large as $k\to\infty$.
To get a finer measurement of the oscillations of a function of unbounded
variation, then, we can penalize the growth of $k$ in the optimization
equation (\ref{eq:tv}). This leads us to define, for each $\lambda>0$,
the \emph{regularized total variation} 
\begin{equation}
\Phi_{[a,b],\lambda}(f)=\sup_{k\ge0}(TV_{[a,b],k}(f)-\lambda k)=\sup_{k\ge0}\left(\max_{a=t_{0}<t_{1}<\cdots<t_{k+1}=b}\sum_{i=1}^{k+1}|f(t_{i})-f(t_{i-1})|-\lambda k\right).\label{eq:phi}
\end{equation}
We use the term ``regularized'' because, if we think of the optimization
equation (\ref{eq:tv}) as attempting to find the piecewise linear
approximation to $f$ that captures as much of the oscillation of
$f$ as possible (although the supremum may of course not be achieved),
then the penalty term in the regularized equation (\ref{eq:phi})
attempts to prevent ``overfitting'' $f$ by using too many piecewise-linear
segments.

Brownian motion provides a standard example of a function with (almost
surely) unbounded variation on a finite interval. The goal of the
present paper is to evaluate $\mathbf{E}\Phi_{[0,1],\lambda}(W)$,
where $\{W_{t}\}_{0\le t\le1}$ is a standard real-valued Brownian
motion. We will prove the following.
\begin{thm}
\label{thm:main-thm}For each $\lambda>0$, we have
\[
0\le\mathbf{E}\Phi_{[0,1],\lambda}(W)-\frac{1}{\lambda}\le\lambda,
\]
and thus in particular $\mathbf{E}\Phi_{[0,1],\lambda}(W)=1/\lambda+O(\lambda)$
as $\lambda\to0.$
\end{thm}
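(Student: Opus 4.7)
I would prove the two inequalities in the theorem separately, relying on the explicit construction of the partition attaining the regularized total variation that the abstract advertises as a principal construction of the paper. A convenient preliminary observation is Brownian scaling: taking $c=\lambda^{2}$ in $W_{ct}\stackrel{d}{=}\sqrt{c}\,\widetilde{W}_{t}$ gives
\[
\Phi_{[0,1],\lambda}(W)\stackrel{d}{=}\lambda\,\Phi_{[0,1/\lambda^{2}],1}(\widetilde{W}),
\]
which recasts the theorem as the statement $T\le\mathbf{E}\Phi_{[0,T],1}(W)\le T+1$ for all $T>0$. Working in this rescaled form is often cleaner and makes the $O(\lambda)$ error terms into $O(1)$ constants.

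For the lower bound, I would exhibit an explicit random partition and estimate its expected objective from below. The natural candidate is informed by the first-order optimality conditions for the regularized TV: in any optimal partition, consecutive values $W_{t_{i}}$ must alternate in direction (else one could delete a point and decrease $k$ without changing the telescoped sum), and every internal point must contribute marginal benefit at least $\lambda$, which forces all increments to satisfy $|W_{t_{i}}-W_{t_{i-1}}|\ge\lambda/2$. Concretely I would place partition points at a sequence of Brownian stopping times corresponding to successive ``$\lambda/2$-persistent'' local extrema of $W$. The expected number of such partition points and the expected value of $\sum|W_{t_{i}}-W_{t_{i-1}}|-\lambda k$ can then be computed by the strong Markov property and Wald-type identities for the exit times of Brownian motion from intervals of width~$\lambda$, and these would combine to give the required lower bound.

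For the upper bound, the starting point is the elementary AM-GM inequality $|x|\le x^{2}/(2\lambda)+\lambda/2$, equivalent to $(|x|-\lambda)^{2}\ge 0$, which, summed over any partition, yields
\[
\sum_{i=1}^{k+1}|W_{t_{i}}-W_{t_{i-1}}|-\lambda k\;\le\;\frac{1}{2\lambda}\sum_{i=1}^{k+1}(W_{t_{i}}-W_{t_{i-1}})^{2}+\frac{\lambda(1-k)}{2}.
\]
For a partition whose interior points are stopping times with deterministic final time $t_{k+1}=1$, optional stopping applied to the martingale $W_{t}^{2}-t$ gives $\mathbf{E}\sum(W_{t_{i}}-W_{t_{i-1}})^{2}=1$, from which the upper bound would follow readily. \textbf{The main obstacle} is that the supremum in the definition of $\Phi_{[0,1],\lambda}(W)$ is taken over \emph{all} partitions, so the optimizing partition in general depends on the full Brownian path and is not a sequence of stopping times. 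I would address this by combining the AM-GM bound above with the structural properties of the optimizing partition established earlier in the paper --- specifically, that within each subinterval of the optimal partition the oscillation of $W$ exceeds $|W_{t_{i}}-W_{t_{i-1}}|$ by at most $\lambda$ --- to reduce the estimate to a stopping-time partition with only a controlled additional error, from which $\mathbf{E}\Phi_{[0,1],\lambda}(W)\le 1/\lambda+\lambda$ follows.
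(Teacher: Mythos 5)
Your upper bound via AM--GM cannot work, and this is a genuine gap rather than a technicality. For the optimal partition of Brownian motion the interior increments $|W_{t_i}-W_{t_{i-1}}|=|m_i-m_{i-1}|$ are not close to $\lambda$: by the strong Markov property and the reflection principle, $|m_i-m_{i-1}|-\lambda$ is exponentially distributed with mean $\lambda$, so $\mathbf{E}|m_i-m_{i-1}|=2\lambda$ while $\mathbf{E}|m_i-m_{i-1}|^2=5\lambda^2$. Plugging this into $|x|\le x^2/(2\lambda)+\lambda/2$ leaves slack of roughly $\lambda$ per increment. With $k\approx\lambda^{-2}$ increments the accumulated slack is of order $1/\lambda$, not $O(\lambda)$: the AM--GM side of your display evaluates to about $2/\lambda$, which is a leading-order overshoot over the true answer $1/\lambda$. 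The structural facts you cite (no $\lambda$-downtick inside an uptick, etc.) constrain the increments from below but do not prevent them from being substantially larger than $\lambda$, which is exactly the regime in which AM--GM is lossy. There is also no route to ``reduce to a stopping-time partition'' on the upper-bound side: the stopping-time partition $\{\tau_j\}$ gives a smaller value of $\Phi_{I,\lambda,P}$ than the optimal $\{t_j\}$, so such a reduction would yield a lower bound, not an upper bound. On the lower-bound side your plan is closer to what works (the paper proves an analogue of your rescaled identity and computes $\mathbf{E}(\tau_j-\tau_{j-1})=\lambda^2$ exactly via optional stopping), but a direct one-interval computation leaves you with renewal overshoot/undershoot terms of size $O(1)$ in the count $k'$, hence $O(\lambda)$ in $\lambda k'$, and the sign of the combined boundary corrections is delicate; it is not automatic that you obtain a one-sided $O(\lambda)$ bound rather than a two-sided one.

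The paper sidesteps both difficulties. It first uses the explicit formula (Proposition~\ref{prop:explicit-formula}), Lemma~\ref{lem:mg-vanish}, and elementary renewal theory to show $\mathbf{E}\Phi_{[0,b],\lambda}(W)/b\to 1/\lambda$ as $b\to\infty$, where the boundary terms are genuinely negligible. It then combines the near-additivity
\[
\Phi_{[a,b],\lambda}(f)+\Phi_{[b,c],\lambda}(f)-\lambda\le\Phi_{[a,c],\lambda}(f)\le\Phi_{[a,b],\lambda}(f)+\Phi_{[b,c],\lambda}(f)
\]
of Proposition~\ref{prop:crude-split} with Brownian scaling (Proposition~\ref{prop:scaling}) to derive the recursion $\xi(\lambda/\sqrt{L})=\sqrt{L}\,\xi(\lambda)-\sqrt{L}\,\varepsilon_{\lambda,L}$ with $\varepsilon_{\lambda,L}\in[0,\lambda]$, and iterating this along $L=2^r$ yields exactly the one-sided estimate $\xi(\lambda)=1/\lambda+\alpha_\lambda$ with $\alpha_\lambda\in[0,\lambda]$. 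You should replace the AM--GM argument by this (or some other) bootstrap from the large-$b$ asymptotics; as written, the upper-bound half of your argument fails at leading order.
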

The proof will proceed in three steps. In Section~\ref{sec:char},
we give an explicit characterization of the optimal partition in (\ref{eq:phi}).
In Section~\ref{sec:est-small}, we use this characterization along
with martingale methods and Brownian scaling to evaluate the asymptotic
behavior of $\mathbf{E}\Phi_{[0,1],\lambda}(W)$ as $\lambda\to0$.
Finally, in Section~\ref{sec:error-anal}, we use the Markov property
of Brownian motion to establish the error bound stated in Theorem~\ref{thm:main-thm}.

We note in passing that the methods we use here, especially the characterization
of the optimal partition, are quite specific to our particular notion
of regularized total variation. In particular, one might wish to consider
the quantity
\[
\sup_{k\ge0}\left(\max_{a=t_{0}<t_{1}<\cdots<t_{k+1}=b}\sum_{i=1}^{k+1}|f(t_{i})-f(t_{i-1})|^{p}-\lambda k\right)
\]
for some power $p$, but our method of characterization of the optimal
partition fails for this quantity. One essential difficulty is that
the question of whether an interval $[t_{j-1},t_{j}]$ of a partition
should be ``split'' at $s,s'\in[t_{j-1},t_{j}]$ to improve the
objective function (\ref{eq:phi}) depends on the positions of $f(s)$
and $f(s')$ in the interval $f([t_{j-1},t_{j}])$, not just on their
difference $f(s')-f(s)$, as is the case when $p=1$ according to
Proposition~\ref{prop:partition-structure-2}(\ref{enu:nowiggle})
below.

\section{\label{sec:char}Characterizing the optimal partition}

We begin by giving a characterization of a partition of the interval
for which the outer supremum in (\ref{eq:tv}) is achieved. (We establish
the existence of such a partition in Corollary~\ref{cor:exists-opt}.)
We begin by introducing some notation and terminology.
\begin{defn}
For a partition $P=[a=t_{0}<t_{1}<\cdots<t_{k}<t_{k+1}=b]$ of $[a,b]$,
define $|P|=k$ and, for $\lambda>0$, let $\Phi_{I,\lambda,P}(f)=\sum\limits _{i=1}^{k+1}|f(t_{i})-f(t_{i-1})|-\lambda|P|$.
(Thus $\Phi_{I,\lambda}(f)=\max\limits _{P}\Phi_{I,\lambda,P}(f).$)
\end{defn}

\begin{defn}
Let $f\in C^{0}[a,b]$. Suppose $[x,y]\subset[a,b]$. We say that
$[x,y]$ is an \emph{$uptick$} (resp.\emph{~downtick)} for $f$
if $f(t_{i})>f(t_{i-1})$ (resp.~$f(t_{i})<f(t_{i-1})$), and a \emph{$\lambda$-uptick}
(resp.\emph{~$\lambda$-downtick}) for $f$ if $f(t_{i})\ge f(t_{i-1})+\lambda$
(resp.~$f(t_{i})\le f(t_{i-1})-\lambda$).
\end{defn}
Our first proposition is based on observations to the effect that
a partition with certain properties cannot be optimal since moving
or removing one or more of its points would increase $\Phi_{I,\lambda,P}(f)$.
In particular, this will allow us to derive the existence of an optimal
partition as a corollary.
\begin{prop}
\label{prop:partition-structure}Let $\lambda>0$ and $f\in C^{0}[a,b]$.
Suppose that $P=[a=t_{0}<t_{1}<\cdots<t_{k}<t_{k+1}=b]$ is a partition
of $I=[a,b]$ such that $\Phi_{I,\lambda,P}(f)\ge\Phi_{I,\lambda,Q}(f)$
whenever $|Q|\le|P|$. Then
\begin{enumerate}
\item \label{enu:updown}If $1\le j\le k$, then $f(t_{j})-f(t_{j-1})$
and $f(t_{j})-f(t_{j+1})$ have the same nonzero sign. In other words,
exactly one of $[t_{j-1},t_{j}]$ and $[t_{j},t_{j+1}]$ is an uptick
and the other is a downtick.
\item \label{enu:minmax}

\begin{enumerate}
\item \label{enu:minmax-rep} If $a<t_{j-1}$ and $[t_{j-1},t_{j}]$ is
an uptick (resp.~downtick), then $f$ attains its minimum (resp.~maximum)
on $[t_{j-1},t_{j}]$ at $t_{j-1}$.
\item If $t_{j}<b$ and $[t_{j-1},t_{j}]$ is an uptick (resp.~downtick),
then $f$ attains its maximum (resp.~minimum) on $[t_{j-1},t_{j}]$
at $t_{j}$.
\end{enumerate}
\item \label{enu:localminmax}For $1\le j\le k$, if $[t_{j-1},t_{j}]$
is an uptick (resp.~downtick), then $f$ attains its maximum (resp.~minimum)
on $[t_{j-1},t_{j+1}]$ at $t_{j}$.
\item \label{enu:bigenough}If $a<t_{j-1}$ and $t_{j}<b$, then we have
$|f(t_{j})-f(t_{j-1})|\ge\lambda$.
\item \label{enu:bigenoughends} If $k\ge1$, then for each $1\le j\le k+1$,
we have $|f(t_{j})-f(t_{j-1})|\ge\lambda/2$.
\end{enumerate}
\end{prop}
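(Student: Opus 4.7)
The plan is to prove each assertion by contradiction: assuming the stated conclusion fails, I will exhibit a partition $Q$ with $|Q|\le|P|$ and $\Phi_{I,\lambda,Q}(f) > \Phi_{I,\lambda,P}(f)$, contradicting the optimality hypothesis on $P$. Only three kinds of perturbation are needed: deleting a single point, deleting two consecutive points, and replacing one point by another.

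For (1), the stated failure means
\[
|f(t_j)-f(t_{j-1})| + |f(t_{j+1})-f(t_j)| = |f(t_{j+1})-f(t_{j-1})|
\]
(either one summand vanishes, or $f(t_j)$ lies between its two neighbors), so deleting $t_j$ preserves the variation sum while decreasing $|P|$ by one, strictly increasing $\Phi$ by $\lambda$. For (2), assume without loss of generality that $[t_{j-1},t_j]$ is an uptick and some $s\in(t_{j-1},t_j)$ satisfies $f(s)<f(t_{j-1})$; by (1), $[t_{j-2},t_{j-1}]$ is a downtick, so replacing $t_{j-1}$ by $s$ keeps $|P|$ fixed and strictly increases both of the two affected variation terms $|f(t_{j-2})-f(\cdot)|$ and $|f(t_j)-f(\cdot)|$. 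The right-endpoint version and the downtick case are symmetric. Part (3) is proved in the same way: if $f(s)>f(t_j)$ for some $s\in(t_{j-1},t_{j+1})\setminus\{t_j\}$, then by (1) $[t_j,t_{j+1}]$ is a downtick and replacing $t_j$ by $s$ raises each of the two adjacent contributions by exactly $f(s)-f(t_j)>0$.

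For (5), I will delete a single interior point $t_i$ (with $1\le i\le k$) and split on the sign of $f(t_{i+1})-f(t_{i-1})$. Using the alternating-sign pattern from (1), a short computation in each case yields
\[
\Delta\Phi \;=\; \lambda - 2\min\bigl(|f(t_i)-f(t_{i-1})|,\,|f(t_{i+1})-f(t_i)|\bigr),
\]
so optimality forces both adjacent jumps to be at least $\lambda/2$; letting $i$ range over $\{1,\ldots,k\}$ covers every index $j\in\{1,\ldots,k+1\}$. For (4), suppose $\alpha := |f(t_j)-f(t_{j-1})| < \lambda$ with both $t_{j-1}$ and $t_j$ interior, and set $\beta=|f(t_{j+1})-f(t_j)|$ and $\gamma=|f(t_{j-1})-f(t_{j-2})|$. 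The key move is to delete \emph{both} $t_{j-1}$ and $t_j$ from $P$. Using the alternating signs one finds $|f(t_{j-2})-f(t_{j+1})| = |\beta+\gamma-\alpha|$, and a case split on whether $\alpha\ge\beta+\gamma$ gives
\[
\Delta\Phi \;=\; \begin{cases} 2(\lambda-\alpha), & \alpha<\beta+\gamma,\\ 2(\lambda-\beta-\gamma), & \alpha\ge\beta+\gamma.\end{cases}
\]
Under the assumption $\alpha<\lambda$, the first case yields $\Delta\Phi>0$ directly, and in the second $\beta+\gamma\le\alpha<\lambda$ also forces $\Delta\Phi>0$, a contradiction. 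The main subtlety is that single-point deletion alone only yields the $\lambda/2$ bound of (5), so the two-point deletion together with the sign dichotomy (which collapses both branches under the single inequality $\alpha<\lambda$) is what is needed to push the lower bound up to $\lambda$.
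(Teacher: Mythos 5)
Your proposal is correct and follows essentially the same route as the paper: each part is established by exhibiting a perturbation of $P$ (delete one point, delete two consecutive points, or move one point) and using the optimality hypothesis, and the key moves match (single deletion for (1) and (5), point replacement for (2)–(3), double deletion for (4)). The only cosmetic differences are that you prove (3) by a direct replacement rather than citing (2), and you handle (5) uniformly via interior single-point deletion rather than splitting into the endpoint cases $j=1,k{+}1$ and invoking (4) for the interior indices, but these choices do not change the substance of the argument.
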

\begin{proof}
We prove each part in turn.
\begin{enumerate}
\item Let $Q=[a=t_{0}<\cdots<t_{j-1}<t_{j+1}<\cdots<t_{k+1}=b]$, so $|Q|=k-1$
and thus we have 
\begin{align*}
0\le\Phi_{I,\lambda,P}(f)-\Phi_{I,\lambda,Q}(f) & =|f(t_{j+1})-f(t_{j-1})|-\left[|f(t_{j+1})-f(t_{j})|+|f(t_{j})-f(t_{j-1})|\right]-\lambda\\
 & <|f(t_{j+1})-f(t_{j-1})|-\left[|f(t_{j+1})-f(t_{j})|+|f(t_{j})-f(t_{j-1})|\right],
\end{align*}
so $|f(t_{j-1})-f(t_{j})|+|f(t_{j})-f(t_{j-1})|<|f(t_{j+1})-f(t_{j-1})|$.
But if $f(t_{j-1})-f(t_{j})$ and $f(t_{j})-f(t_{j-1})$ were both
nonpositive or nonnegative then equality would hold.
\item We prove \ref{enu:minmax-rep} in the case when $[t_{j-1},t_{j}]$
is an uptick. Fix $x\in(t_{j-1},t_{j})$ and let $Q=[a=t_{0}<\cdots<t_{j-2}<x<t_{j}<\cdots<t_{k+1}=b]$,
so $|Q|=k$ and thus we have (using part \ref{enu:updown})
\begin{align*}
0\le\Phi_{I,\lambda,P}(f)-\Phi_{I,\lambda,Q}(f) & =f(t_{j})-f(t_{j-1})+f(t_{j-2})-f(t_{j-1})-\left[|f(t_{j})-f(x)|+|f(x)-f(t_{j-2})|\right]\\
 & \le f(t_{j})-2f(t_{j-1})-f(t_{j-2})-[f(t_{j})-2f(x)+f(t_{j-2})]\\
 & =2[f(x)-f(t_{j-1})],
\end{align*}
so $f(x)\ge f(t_{j-1})$.
\item This is an immediate consequence of the first two statements.
\item We prove the case when $[t_{j-1},t_{j}]$ is an uptick. Let $Q=[a=t_{0}<\cdots<t_{j-2}<t_{j+1}<\cdots<t_{k+1}=b]$,
so $|Q|=k-2$ and thus we have
\begin{align*}
0 & \le\Phi_{I,\lambda,P}(f)-\Phi_{I,\lambda,Q}(f)\\
 & =f(t_{j})-f(t_{j+1})+f(t_{j})-f(t_{j-1})+f(t_{j-2})-f(t_{j-1})-|f(t_{j+1})-f(t_{j-2})|-2\lambda\\
 & =2[f(t_{j})-f(t_{j-1})]-(f(t_{j+1})-f(t_{j-2})+|f(t_{j+1})-f(t_{j-2})|)-2\lambda\\
 & =2[f(t_{j})-f(t_{j-1})]-2\max\{f(t_{j+1})-f(t_{j-2}),0\}-2\lambda\\
 & \le2[f(t_{j})-f(t_{j-1})-\lambda],
\end{align*}
so $f(t_{j})-f(t_{j-1})\ge\lambda$.
\item Given part \ref{enu:bigenough}, it is sufficient to prove the cases
$j=1$ and $j=k+1$. We will prove the case $j=1$; the case $j=k+1$
is the same. Suppose wlog that $f(t_{1})>f(t_{0}),f(t_{2}).$ Let
$Q=[a=t_{0}<t_{2}<\cdots<t_{k+1}=b]$, so $|Q|=k-1$ and
\begin{align*}
0\le\Phi_{I,\lambda,P}(f)-\Phi_{I,\lambda,Q}(f) & =f(t_{1})-f(t_{0})+f(t_{1})-f(t_{2})-|f(t_{0})-f(t_{2})|-\lambda\\
 & =2f(t_{1})-2\max\{f(t_{0}),f(t_{2})\}-\lambda\\
 & \le2[f(t_{1})-f(t_{0})]-\lambda,
\end{align*}
so $|f(t_{1})-f(t_{0})|=f(t_{1})-f(t_{0})\ge\lambda/2$.\qedhere
\end{enumerate}
\end{proof}
\begin{cor}
\label{cor:exists-opt}Let $\lambda>0$ and $f\in C^{0}[a,b]$. Then
there is a $K\ge0$ such that if $|P|>K$, then there is a partition
$Q$ with $|Q|\le|P|$ and $\Phi_{I,\lambda,Q}(f)>\Phi_{I,\lambda,P}(f)$.
In particular, there is a partition $P$ (with $|P|\le K$) so that
$\Phi_{I,\lambda,P}(f)=\Phi_{I,\lambda}(f)$.\end{cor}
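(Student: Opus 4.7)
The plan is to use Proposition~\ref{prop:partition-structure}(\ref{enu:bigenoughends}) together with uniform continuity of $f$ to bound the size of any partition whose $\Phi_{I,\lambda,\cdot}(f)$-value beats that of all partitions of at most the same size.

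For the first statement, I would invoke uniform continuity of $f$ on $[a,b]$ to pick $\delta>0$ with $|s-t|<\delta\Rightarrow|f(s)-f(t)|<\lambda/2$, and set $K=\lceil(b-a)/\delta\rceil$. Given a partition $P$ with $|P|>K$, I argue by contradiction: if no $Q$ with $|Q|\le|P|$ satisfied $\Phi_{I,\lambda,Q}(f)>\Phi_{I,\lambda,P}(f)$, then $P$ would meet the hypothesis of Proposition~\ref{prop:partition-structure}, so by part~(\ref{enu:bigenoughends}) every subinterval would satisfy $|f(t_j)-f(t_{j-1})|\ge\lambda/2$ and hence have length at least $\delta$. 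Then $|P|+1\le(b-a)/\delta\le K$ would contradict $|P|>K$, producing the desired $Q$.

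For the ``in particular'' conclusion, set $M_n=\sup_{|Q|\le n}\Phi_{I,\lambda,Q}(f)$. First I would check that $M_n$ is attained at each $n$: along any maximizing sequence, pass to a subsequence of constant size $m\le n$ whose $m$-tuple of partition points converges in $[a,b]^m$; after deduplicating coincident points and points equal to $a$ or $b$, the limit is a genuine partition $P^{*}$ of some size $m^{*}\le m$, and continuity of $f$ together with $\lambda m^{*}\le\lambda m$ yields $\Phi_{I,\lambda,P^{*}}(f)\ge M_n$. Whenever $n>K$, the attaining partition must itself have size at most $K$, since otherwise the first part would produce a strictly better partition of size at most $n$, contradicting the definition of $M_n$. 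Hence $M_n=M_K$ for all $n\ge K$, so $\Phi_{I,\lambda}(f)=\sup_n M_n=M_K$ is attained by a partition of size at most $K$.

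The main obstacle is the attainment step at fixed $n$: one must verify that the effective partition size can only drop under taking limits of partition points, so that the penalty $-\lambda|P|$ does not sabotage the limit inequality. Once that routine compactness bookkeeping is in place, Proposition~\ref{prop:partition-structure}(\ref{enu:bigenoughends}) combined with uniform continuity carries the rest of the argument without additional input.
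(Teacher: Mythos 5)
Your proof is correct and follows essentially the same route as the paper. The first part is the same uniform-continuity plus Proposition~\ref{prop:partition-structure}(\ref{enu:bigenoughends}) argument, only organized as a contradiction-from-lengths rather than the paper's pigeonhole-to-one-short-interval phrasing; these are equivalent. For the existence step, the paper compactifies the partition space by passing to $\tilde{\mathcal{P}}_k$ (allowing coincident points) and observes that coincidences cannot be optimal when $\lambda>0$, whereas you extract a convergent subsequence directly in $[a,b]^m$ and deduplicate the limiting tuple, noting that the penalty can only improve when points merge. These are two standard ways to package the same compactness argument, and both correctly combine with the first part to conclude that the supremum is attained by a partition of size at most $K$.
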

\begin{proof}
Since $f$ is uniformly continuous on $[a,b]$, there is a $\delta>0$
so that if $|x-y|<\delta$, then $|f(x)-f(y)|<\lambda/2$. Let $K=(b-a)/\delta$.
If $P=[a=t_{0}<t_{1}<\cdots<t_{k+1}=b]$ satisfies $|P|>K$, then
by the pigeonhole principle there is a $j$ so that $|t_{j}-t_{j-1}|<\delta$
and hence $|f(t_{j})-f(t_{j-1})|<\lambda/2$, so by statement~\ref{enu:bigenoughends}
of Proposition~\ref{prop:partition-structure} there is a $Q$ with
$|Q|\le|P|$ and $\Phi_{I,\lambda,Q}(f)>\Phi_{I,\lambda,P}(f)$.

Let $\tilde{\mathcal{P}}_{k}$ be the space of partitions of size
$k$ with possibly-coincident points, which is to say partitions of
the form $[a=t_{0}\le t_{1}\le\cdots\le t_{k}\le t_{k+1}=b]$. If
we equip $\tilde{\mathcal{P}}_{k}$ with the usual topology, then
$\tilde{\mathcal{P}}_{k}$ is compact for each $k$ and $\Phi_{I,\lambda,P}(f)$
depends continuously on $P$. Therefore, the maximum of $\Phi_{I,\lambda,P}(f)$
on $\bigsqcup\limits _{k=1}^{r}\tilde{\mathcal{P}}_{k}$ is achieved
whenever $r<\infty$. But by the previous paragraph, the maximum cannot
be achieved at a $P$ with $|P|>K$, and since $\lambda>0$, the maximum
cannot be achieved at a partition with coincident points. Therefore,
the maximum of $\Phi_{I,\lambda,P}(f)$ over all partitions $P$ (without
coincident points) is achieved.
\end{proof}
Now that we know that an optimal partition exists, we impose further
conditions on such a partition, in addition to the ones we already
have according to Proposition~\ref{prop:partition-structure}.
\begin{prop}
\label{prop:partition-structure-2}Let $\lambda>0$ and $f\in C^{0}[a,b]$.
Suppose that $P=[a=t_{0}<t_{1}<\cdots<t_{k}<t_{k+1}=b]$ is a partition
of $I=[a,b]$ such that $\Phi_{I,\lambda,P}(f)=\Phi_{I,\lambda}(f),$
and moreover that $|P|$ is maximal among all such $P$. Then we have
\begin{enumerate}
\item \label{enu:minmaxends}

\begin{enumerate}
\item \label{enu:minmaxends-rep}If $[a,t_{1}]$ is an uptick (resp.~downtick)
then $\min\limits _{a\le x\le t_{1}}f(x)>f(a)-\lambda/2$ (resp.~$\max\limits _{a\le x\le t_{1}}f(x)<f(a)+\lambda/2$).
\item \label{enu:minmaxends-last}If $[t_{k},b]$ is an uptick (resp.~downtick)
then $\max\limits _{t_{k}\le x\le b}f(x)<f(b)+\lambda/2$ (resp.~$\min\limits _{t_{k}\le x\le b}f(x)>f(b)-\lambda/2$).
\end{enumerate}
\item \label{enu:nowiggle} If $[t_{j-1},t_{j}]$ is an uptick (resp.~downtick),
then $[t_{j-1},t_{j}]$ contains no $\lambda$-downtick (resp.~$\lambda$-uptick).
\end{enumerate}
\end{prop}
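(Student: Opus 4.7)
My plan is to prove both parts by contradiction. In each case I will construct a refinement $Q$ of $P$, obtained by inserting one or two extra partition points, for which $\Phi_{I,\lambda,Q}(f) \ge \Phi_{I,\lambda,P}(f)$. A strict inequality here contradicts the assumed optimality of $P$, while equality combined with $|Q| > |P|$ contradicts the maximality of $|P|$ among optimal partitions. Symmetry (replacing $f$ by $-f$, and reflecting the interval) reduces the downtick versions and part (1b) to the uptick versions of (1a) and (2), so I will only set up those.

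For (1a), I will let $x^* \in [a, t_1]$ attain the minimum of $f$ on $[a, t_1]$. The uptick hypothesis rules out $x^* = t_1$, and if the claim fails, so that $f(x^*) \le f(a) - \lambda/2$, then $x^* \ne a$ as well, so $x^* \in (a, t_1)$. Forming $Q$ by inserting $x^*$ into $P$ changes the variation only on the first interval, and using $f(x^*) < f(a) < f(t_1)$ a direct computation will give
\[
\Phi_{I,\lambda,Q}(f) - \Phi_{I,\lambda,P}(f) = 2(f(a) - f(x^*)) - \lambda \ge 0,
\]
which is the contradiction I need (strict if $f(x^*) < f(a) - \lambda/2$, otherwise coming from $|Q| = |P| + 1$).

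For (2), I will suppose $[t_{j-1}, t_j]$ is an uptick and contains a $\lambda$-downtick $[s, s']$, and abbreviate $A = f(t_{j-1})$, $B = f(s)$, $C = f(s')$, $D = f(t_j)$, so $D > A$ and $B - C \ge \lambda$. The corner case $s = t_{j-1}, s' = t_j$ is immediately absurd, since it would force $A - D \ge \lambda$ in the face of $D > A$. In each of the three remaining configurations, $Q$ is formed by inserting into $P$ whichever of $s, s'$ is not already a partition point. Using $|C - B| = B - C \ge \lambda$, $|B - A| \ge B - A$, $|D - C| \ge D - C$, and $|D - A| = D - A$, the $A$ and $D$ terms telescope and I expect to obtain
\[
\Phi_{I,\lambda,Q}(f) - \Phi_{I,\lambda,P}(f) \ge 2(B - C) - (|Q| - |P|)\lambda \ge 0,
\]
from which the contradiction follows as in (1a).

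The main obstacle I foresee is the boundary bookkeeping in (2), where $s = t_{j-1}$ or $s' = t_j$: in such subcases only one point is inserted into $P$, and the corresponding $|B - A|$ or $|D - C|$ term degenerates (since $B = A$ or $C = D$), so each of these subcases must be checked with its own line of accounting. Happily, in the two degenerate subcases the displayed estimate in fact improves to a strict inequality, so the maximality-of-$|P|$ hypothesis is only actually needed as a tiebreaker in (1a) when $f(x^*) = f(a) - \lambda/2$ exactly, and in the fully-interior case of (2) when $B - C = \lambda$ exactly.
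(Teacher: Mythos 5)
Your proposal is correct and uses essentially the same argument as the paper: insert one or two extra points into $P$ to form $Q$, compute $\Phi_{I,\lambda,Q}(f)-\Phi_{I,\lambda,P}(f)$, and derive a contradiction from optimality (or from maximality of $|P|$ when the difference is exactly zero). Your explicit bookkeeping of the boundary subcases in part (2) is a welcome refinement, since the paper's choice of $Q=[\cdots<t_{j-1}<x<y<t_{j}<\cdots]$ tacitly assumes $x$ and $y$ are interior to $[t_{j-1},t_{j}]$, but as you observe the degenerate subcases only strengthen the inequality.
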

\begin{proof}
We prove each part in turn.
\begin{enumerate}
\item We prove \ref{enu:minmaxends-rep} in the case when $[t_{0},t_{1}]$
is an uptick. Fix $x\in(t_{0},t_{1})$ and let $Q=[a=t_{0}<x<t_{1}<\cdots<t_{k+1}=b]$,
so $|Q|=k+1$ and, by the maximality of $\Phi_{I,\lambda,P}(f)$ and
$|P|$, we have
\begin{align*}
0<\Phi_{I,\lambda,P}-\Phi_{I,\lambda,Q}(f) & =f(t_{1})-f(a)-\left[|f(t_{1})-f(x)|+|f(x)-f(a)|\right]+\lambda\\
 & \le f(t_{1})-f(a)-\left[f(t_{1})-f(x)+f(a)-f(x)\right]+\lambda\\
 & =2[f(x)-f(a)]+\lambda,
\end{align*}
so $f(x)>f(a)-\lambda/2$.
\item We prove the case when $[t_{j-1},t_{j}]$ is an uptick. Let $[x,y]\subset[t_{j-1},t_{j}]$
and let $Q=[a=t_{0}<\cdots<t_{j-1}<x<y<t_{j}<\cdots<t_{k+1}=b]$,
so $|Q|=k+2$ and, by the maximality of $\Phi_{I,\lambda,P}(f)$ and
$|P|$, we have
\begin{align*}
0<\Phi_{I,\lambda,P}(f)-\Phi_{I,\lambda,Q}(f) & =f(t_{j})-f(t_{j-1})-\left[|f(t_{j})-f(y)|+|f(y)-f(x)|+|f(x)-f(t_{j-1})|\right]+2\lambda\\
 & \le f(t_{j})-f(t_{j-1})-\left[f(t_{j})-f(y)+f(x)-f(y)+f(x)-f(t_{j-1})\right]+2\lambda\\
 & =2[f(y)-f(x)+\lambda],
\end{align*}
so $f(y)>f(x)-\lambda.$ This implies that $[t_{j-1},t_{j}]$ cannot
contain a $\lambda$-downtick.\qedhere
\end{enumerate}
\end{proof}
Having established all of the needed necessary conditions for a partition
to be optimal, we now set out to write down a construction of an optimal
partition. When $f$ is taken to be a stochastic process, our construction
will be in terms of stopping times for the natural filtration, which
will allow us to use martingale methods in our later analysis.
\begin{defn}
\label{def:taus}Fix $\lambda>0$ and let $[a,b]$ be an interval.
Let $f\in C^{0}[a,b]$ and extend $f$ to $[a,\infty)$ arbitrarily.
We define a sequence of values $\tau_{0}<\tau_{1}<\cdots\in[a,\infty]$
as follows. Let
\begin{align*}
\tau_{0}^{\text{up}} & =\min\left\{ t>a\,\middle|\,f(t)\ge f(a)+\lambda/2\right\} =\min\left\{ t>a\,\middle|\,\text{\ensuremath{[a,t]} is a \ensuremath{(\lambda/2)}-uptick}\right\} ,\\
\intertext{and}\tau_{0}^{\text{down}} & =\min\left\{ t>a\,\middle|\,f(t)\le f(a)-\lambda/2\right\} =\min\left\{ t>a\,\middle|\,\text{\ensuremath{[a,t]} is a \ensuremath{(\lambda/2)}-downtick}\right\} ,
\end{align*}
Let $\tau_{0}=\tau_{0}^{\text{up}}\wedge\tau_{0}^{\text{down}}$.
If $f(\tau_{0}\wedge b)>f(a)$, then we call $\tau_{0}$ an \emph{upstop},
while if $f(\tau_{0}\wedge b)<f(a)$, then we call $\tau_{0}$ a \emph{downstop.
}Now we inductively define $\tau_{j}$ for all $j\ge1$: if $\tau_{j-1}$
is an upstop then define
\begin{align*}
\tau_{j} & =\min\{t>\tau_{j-1}\mid\text{\ensuremath{[\tau_{j-1},t]} contains a \ensuremath{\lambda}-downtick}\}=\min\left\{ t>\tau_{j-1}\,\middle|\,\max\limits _{\tau_{j-1}\le s\le t}f(s)-f(t)\ge\lambda\right\} ,\\
\intertext{\text{and call \ensuremath{\tau_{j}} a downstop, while if \ensuremath{\tau_{j-1}} is a downstop then define}}\tau_{j} & =\min\{t>\tau_{j-1}\mid\text{\ensuremath{[\tau_{j-1},t]} contains a \ensuremath{\lambda}-uptick}\}=\min\left\{ t>\tau_{j-1}\,\middle|\,f(t)-\min\limits _{\tau_{j-1}\le s\le t}f(s)\ge\lambda\right\} .
\end{align*}
and call $\tau_{j}$ an upstop. (We adopt the usual convention that
$\min\emptyset=\infty$.)
\end{defn}

\begin{defn}
\label{def:mjs}With setup as in Definition~\ref{def:taus}, let
$m_{0}=f(a)$ and, for all $j\ge1$ such that $\tau_{j-1}<b$, define
\[
m_{j}=m_{j}=\begin{cases}
\max\limits _{\tau_{j-1}\le s\le\tau_{j}\wedge b}f(s) & \text{if \ensuremath{\tau_{j}} is a downstop, or}\\
\min\limits _{\tau_{j-1}\le s\le\tau_{j}\wedge b}f(s) & \text{if \ensuremath{\tau_{j}} is an upstop.}
\end{cases}
\]

\end{defn}
\begin{figure}
\begin{centering}
\input{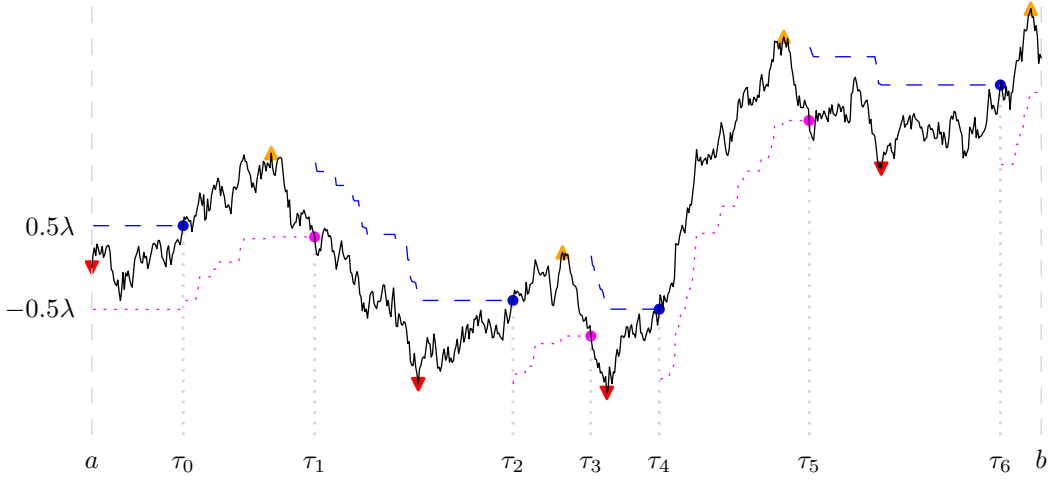}
\par\end{centering}

\caption{\label{fig:stops-illustration}Illustration of Definitions~\ref{def:taus}
and~\ref{def:mjs}. The function $f$ is drawn in solid black. The
blue dashed lines represent the height to which $f$ must climb in
order to establish the next upstop, while the magenta dotted lines
represent the depth to which $f$ must fall to establish the next
downstop. Thus, the upstops and downstops themselves are the times
at which the black line intersects the blue and magenta lines; these
points are marked by circles. Here, $\tau_{j}$ is an upstop if $j$
is even and a downstop if $j$ is odd. The $m_{j}$s are marked by
red and orange triangles.}
\end{figure}
See Figure~\ref{fig:stops-illustration} for a graphical interpretation
of Definitions~\ref{def:taus} and~\ref{def:mjs}.
\begin{prop}
\label{prop:mjtj}With the setup as in Definition~\ref{def:mjs},
if $j\ge1$ and $\tau_{j}<b$, then $m_{j}=f(\tau_{j})+\lambda$ if
$\tau_{j}$ is a downstop and $m_{j}=f(\tau_{j})-\lambda$ if $\tau_{j}$
is a downstop.\end{prop}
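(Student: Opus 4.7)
The plan is to unpack the definitions and let continuity of $f$ do all the work. I would consider first the case in which $\tau_{j-1}$ is an upstop, so that by Definition~\ref{def:taus} the stop $\tau_j$ is a downstop; the other case will be symmetric. In this case, set
\[
g(t) = \max_{\tau_{j-1}\le s\le t} f(s) - f(t).
\]
Definition~\ref{def:taus} then reads $\tau_j = \min\{t>\tau_{j-1}\mid g(t)\ge\lambda\}$, and under the standing assumption $\tau_j<b$, Definition~\ref{def:mjs} simplifies to $m_j = \max_{\tau_{j-1}\le s\le\tau_j} f(s) = g(\tau_j) + f(\tau_j)$. So the entire claim reduces to showing $g(\tau_j)=\lambda$ exactly.

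Next I would verify that $g$ is continuous: the running maximum $t\mapsto\max_{\tau_{j-1}\le s\le t} f(s)$ is continuous because $f$ is (and the interval grows monotonically), and subtracting the continuous function $f(t)$ preserves continuity. Also $g(\tau_{j-1})=0$. The set $\{t>\tau_{j-1}\mid g(t)\ge\lambda\}$ is relatively closed in $(\tau_{j-1},\infty)$, so its infimum $\tau_j$ (finite by hypothesis) lies in it, giving $g(\tau_j)\ge\lambda$. If we had the strict inequality $g(\tau_j)>\lambda$, continuity of $g$ together with $g(\tau_{j-1})=0$ would produce, by the intermediate value theorem, points in $(\tau_{j-1},\tau_j)$ with $g\ge\lambda$, contradicting minimality of $\tau_j$. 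Hence $g(\tau_j)=\lambda$, and $m_j=f(\tau_j)+\lambda$.

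The case in which $\tau_{j-1}$ is a downstop (so $\tau_j$ is an upstop) follows from the identical argument applied to $-f$, which interchanges the roles of max and min and of upstops and downstops, yielding $m_j=f(\tau_j)-\lambda$. I do not anticipate any real obstacle: the proposition is essentially a sanity check unpacking the definitions, with the only substantive ingredient being the elementary observation that a continuous real-valued function stopped at the first time it hits a level must actually attain that level exactly at the stopping time.
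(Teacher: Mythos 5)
Your proposal is correct and takes essentially the same approach as the paper: both unpack Definitions~\ref{def:taus} and~\ref{def:mjs} and invoke continuity of $f$ to conclude that the running-max-minus-value process hits the level $\lambda$ exactly at the first hitting time $\tau_j$. The paper simply states this in one line, whereas you have spelled out the continuity of the auxiliary process $g$ and the intermediate-value/minimality argument in more detail; the content is the same.
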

\begin{proof}
We prove the case in which $\tau_{j}$ is a downstop. In this case
we have
\[
\tau_{j}=\min\left\{ t>\tau_{j-1}\,\middle|\,\max\limits _{\tau_{j-1}\le s\le t}f(s)-f(t)\ge\lambda\right\} ,
\]
so since $f$ is continuous and $\tau_{j}<b$, we see that $m_{j}-f(\tau_{j})=\max\limits _{\tau_{j-1}\le s\le\tau_{j}}f(s)-f(\tau_{j})=\lambda$.\end{proof}
\begin{prop}
\label{prop:part-stoptimes}With setup as in Definition~\ref{def:mjs},
suppose that $P=[a=t_{0}<t_{1}<\cdots<t_{k}<t_{k+1}=b]$ is a partition
of $I=[a,b]$ such that $\Phi_{I,\lambda,P}(f)=\Phi_{I,\lambda}(f)$,
and moreover that $|P|$ is maximal over all such $P$. For each $j\ge1$
such that $\tau_{j-1}<b$, we have the following trichotomy.
\begin{enumerate}
\item \label{enu:genericcase}If $\tau_{j}<b$, then $k\ge j$ and $t_{j}\in[\tau_{j-1},\tau_{j}]$,
and $[t_{j-1},t_{j}]$ is an uptick or downtick as $\tau_{j-1}$ is
an upstop or downstop, respectively.
\item \label{enu:onemore}If $\tau_{j}\ge b$ and $|m_{j}-f(b)|\ge\lambda/2$,
then $k=j$ and $t_{j}\in[\tau_{j-1},b)$.
\item \label{enu:nomore}If $\tau_{j}\ge b$ and $|m_{j}-f(b)|<\lambda/2$,
then $k=j-1$.
\end{enumerate}
Moreover, whenever $1\le j\le k$, we have $f(t_{j})=m_{j}$.\end{prop}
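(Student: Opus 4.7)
The plan is strong induction on $j$, with a separate treatment of the three cases of the trichotomy. At each stage the inductive hypothesis carries, for all $i<j$, the conclusions $t_i\in[\tau_{i-1},\tau_i]$, the correct alternating tick signs of $[t_{i-1},t_i]$, and the identities $f(t_i)=m_i$. I would prove case (\ref{enu:genericcase}) for all applicable $j$ first (that is, while $\tau_j<b$) and then handle cases (\ref{enu:onemore}) and (\ref{enu:nomore}) for the one additional step.

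For the base case $j=1$ of case (\ref{enu:genericcase}), take WLOG $\tau_0$ to be an upstop, so that $f(\tau_0)=f(a)+\lambda/2$ and $\sup_{[a,\tau_0)}f<f(a)+\lambda/2$. To produce $k\ge 1$, one exhibits the two-interval partition $[a,s^*,b]$, where $s^*$ is the maximizer of $f$ on $[\tau_0,\tau_1]$, and verifies via Proposition~\ref{prop:mjtj} that this beats the trivial partition. Next, $t_1\ge\tau_0$: Proposition~\ref{prop:partition-structure}(\ref{enu:bigenoughends}) gives $|f(t_1)-f(a)|\ge\lambda/2$, which together with $\sup_{[a,\tau_0)}f<f(a)+\lambda/2$ forces $t_1\ge\tau_0$. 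Then $[a,t_1]$ must be an uptick---if not, Proposition~\ref{prop:partition-structure-2}(\ref{enu:minmaxends-rep}) would give $\max_{[a,t_1]}f<f(a)+\lambda/2$, colliding with $f(\tau_0)=f(a)+\lambda/2\le\max_{[a,t_1]}f$. Finally $t_1\le\tau_1$: with $s^*$ as above, $f(s^*)=m_1$ and $f(\tau_1)=m_1-\lambda$ by Proposition~\ref{prop:mjtj}, so $[s^*,\tau_1]$ is a $\lambda$-downtick that would sit inside the uptick $[a,t_1]$ if $t_1>\tau_1$, violating Proposition~\ref{prop:partition-structure-2}(\ref{enu:nowiggle}).

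The inductive step for case (\ref{enu:genericcase}) is parallel: Proposition~\ref{prop:partition-structure}(\ref{enu:updown}) fixes the tick sign of $[t_{j-1},t_j]$; Proposition~\ref{prop:partition-structure}(\ref{enu:bigenough}) combined with $f(t_{j-1})=m_{j-1}=f(\tau_{j-1})\mp\lambda$ (from the inductive hypothesis and Proposition~\ref{prop:mjtj}) forces $f(t_j)$ to overshoot $f(\tau_{j-1})$ in the appropriate direction and hence $t_j\ge\tau_{j-1}$; the $\lambda$-tick argument above yields $t_j\le\tau_j$. The identity $f(t_j)=m_j$ then follows from Proposition~\ref{prop:partition-structure}(\ref{enu:localminmax}): the extremizer $s^*\in[\tau_{j-1},\tau_j]$ lies in $[t_{j-1},t_{j+1}]$, so $f(t_j)\ge m_j$, while $t_j\in[\tau_{j-1},\tau_j]$ gives the reverse inequality.

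Cases (\ref{enu:onemore}) and (\ref{enu:nomore}) address whether the tail interval beyond $\tau_{j-1}$ supports one more partition point when $\tau_j\ge b$. Writing $s^*$ for the extremizer of $f$ on $[\tau_{j-1},b]$, a short computation shows that inserting $s^*$ as a new point $t_j$ changes $\Phi$ by exactly $2(|m_j-f(b)|-\lambda/2)$; this is positive in case (\ref{enu:onemore}), forcing $k\ge j$, and then the case-(\ref{enu:genericcase})-style arguments pin down $t_j=s^*<b$ with $f(t_j)=m_j$. In case (\ref{enu:nomore}), Proposition~\ref{prop:partition-structure-2}(\ref{enu:minmaxends-last}) applied at the last interval directly forbids such an insertion, giving $k=j-1$. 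The main difficulty I anticipate is the swap-argument bookkeeping when $t_j$ or $s^*$ happens to coincide with a $\tau$-time, and treating the boundary $|m_j-f(b)|=\lambda/2$ between cases (\ref{enu:onemore}) and (\ref{enu:nomore}) via the $|P|$-maximality hypothesis.
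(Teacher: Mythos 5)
Your overall architecture — induction on $j$, using Propositions~\ref{prop:partition-structure}, \ref{prop:mjtj}, and \ref{prop:partition-structure-2} to trap $t_j$ between $\tau_{j-1}$ and $\tau_j$, and Proposition~\ref{prop:partition-structure}(\ref{enu:localminmax}) to deduce $f(t_j)=m_j$ — is the same as the paper's, and most of the individual steps match. But there is a genuine gap in how you try to establish that the partition has enough points.

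Your argument that $k\ge 1$ (in the base case) proceeds by exhibiting the two-interval partition $[a,s^*,b]$ and claiming it beats the trivial one. This is false in general: if $\tau_0$ is an upstop, $m_1=f(s^*)$, and $f(b)\ge m_1$, then
\[
\Phi_{I,\lambda,[a,s^*,b]}(f)=(m_1-f(a))+(f(b)-m_1)-\lambda = \Phi_{I,\lambda,[a,b]}(f)-\lambda<\Phi_{I,\lambda,[a,b]}(f),
\]
so inserting $s^*$ strictly hurts. (For a concrete instance: $f$ rises from $0$ to $\lambda/2+\epsilon$, drops by $\lambda$, then rises to $f(b)=\lambda$.) Exhibiting a better partition would require choosing the new points case by case, which is awkward; the paper instead derives $k\ge 1$ indirectly from the structure of $P$ itself: $[a,t_1]$ must be an uptick by Proposition~\ref{prop:partition-structure-2}(\ref{enu:minmaxends-rep}), hence contains no $\lambda$-downtick by Proposition~\ref{prop:partition-structure-2}(\ref{enu:nowiggle}), yet $[\tau_0,\tau_1]\subset[a,b]$ does contain one, forcing $t_1<\tau_1<b$ and hence $k\ge1$. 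The same issue recurs in your inductive step: you invoke Proposition~\ref{prop:partition-structure}(\ref{enu:bigenough}) to get $t_j\ge\tau_{j-1}$, but that proposition requires $t_j<b$, i.e.\ $k\ge j$, which you never establish. The paper gets $k\ge j$ and $t_j<\tau_j$ simultaneously from the no-$\lambda$-tick argument, and only \emph{then} applies Proposition~\ref{prop:partition-structure}(\ref{enu:bigenough}).

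Two smaller points in cases~(\ref{enu:onemore}) and~(\ref{enu:nomore}). First, you never rule out $k\ge j+1$; the paper does this by observing that $[\tau_{j-1},b]$ contains no opposite-signed $\lambda$-tick, so by Proposition~\ref{prop:partition-structure}(\ref{enu:bigenough}) there is no room for $t_{j+1}<b$. Second, your claim that Proposition~\ref{prop:partition-structure-2}(\ref{enu:minmaxends-last}) ``directly forbids such an insertion'' in case~(\ref{enu:nomore}) does not go through: applied to the would-be last interval $[t_j,b]$, that proposition bounds the extremum of $f$ on $[t_j,b]$, which is compatible with $|m_j-f(b)|<\lambda/2$. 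The paper instead uses Proposition~\ref{prop:partition-structure}(\ref{enu:bigenoughends}) (with $f(t_j)=m_j$) to get the contradiction $|f(b)-f(t_j)|\ge\lambda/2$. Proposition~\ref{prop:partition-structure-2}(\ref{enu:minmaxends-last}) is what the paper uses in case~(\ref{enu:onemore}) to rule out $k=j-1$.
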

\begin{proof}
We prove case~\ref{enu:genericcase} by induction on $j$.

We first prove the base case, $j=1$. Assume wlog that $\tau_{0}$
is an upstop. Proposition~\ref{prop:partition-structure}(\ref{enu:bigenoughends})
implies that $t_{1}\ge\tau_{0}$. Since we assume $\tau_{0}<b$, we
have $f(\tau_{0})=f(a)+\lambda/2$. Proposition~\ref{prop:partition-structure-2}(\ref{enu:minmaxends-rep})
implies that $[t_{0},t_{1}]$ cannot be a downtick, so it is an uptick.
Proposition~\ref{prop:partition-structure-2}(\ref{enu:nowiggle})
then implies that $[t_{0},t_{1}]$ cannot contain a $\lambda$-downtick,
but (since $\tau_{1}<b<\infty$) $[\tau_{0},\tau_{1}]$ contains a
$\lambda$-downtick, so $[t_{0},t_{1}]\not\supset[\tau_{0},\tau_{1}]$,
so $k\ge1$ and $t_{1}<\tau_{1}$. This proves the base case.

We now consider the inductive step, so assume that $t_{j-2}\le\tau_{j-2}\le t_{j-1}\le\tau_{j-1}<\tau_{j}<b$
and that $[t_{j-2},t_{j-1}]$ is an uptick if and only if $\tau_{j-2}$
is an upstop. Assume wlog that $\tau_{j-1}$ is a downstop, so $\tau_{j-2}$
is an upstop and $[t_{j-2},t_{j-1}]$ is an uptick. Proposition~\ref{prop:partition-structure}(\ref{enu:updown})
implies that $[t_{j-1},t_{j}]$ is a downtick. Since $[\tau_{j-1},\tau_{j}]$
contains a $\lambda$-uptick, Proposition~\ref{prop:partition-structure-2}(\ref{enu:nowiggle})
implies that $[t_{j-1},t_{j}]\not\supset[\tau_{j-1},\tau_{j}]$, so
$k\ge j$ and $t_{j}<\tau_{j}<b$ since $t_{j-1}\le\tau_{j-1}$. Proposition~\ref{prop:partition-structure}(\ref{enu:bigenough})
then implies that $[t_{j-1},t_{j}]$ contains a $\lambda$-downtick,
so $t_{j}\ge\tau_{j-1}$ since $t_{j-1}\ge\tau_{j-2}$ and $\tau_{j-1}$
is the first time $t$ such that $[\tau_{j-2},t]$ contains a $\lambda$-downtick.
This completes the proof of the inductive step for case~\ref{enu:genericcase}.

We now consider cases~\ref{enu:onemore}~and~\ref{enu:nomore},
so assume that $\tau_{j-1}<b\le\tau_{j}$. By case~\ref{enu:genericcase},
we see that $k\ge j-1$ and $t_{j-1}\le\tau_{j-1}$. Assume wlog that
$\tau_{j-1}$ is a downstop. In the same way as above, Proposition~\ref{prop:partition-structure}(\ref{enu:bigenough},
\ref{enu:bigenoughends}) implies that $t_{j}\ge\tau_{j-1}$ if $t_{j}<b$,
and if $t_{j}=b$ then of course $t_{j}\ge\tau_{j-1}$ as well. Since
$\tau_{j}\ge b$ is the first upstop after $\tau_{j-1}$, we observe
that $[\tau_{j-1},b]$ does not contain a $\lambda$-uptick, so $k\le j$
since if $k\ge j+1$, then $[t_{j},t_{j+1}]$ would have to be a $\lambda$-uptick
by Proposition~\ref{prop:partition-structure}(\ref{enu:bigenough}).

Under our wlog assumption, we have $m_{j}=\min\limits _{\tau_{j-1}\le s\le b}f(s)$.
If $m_{j}\le f(b)-\lambda/2$ and $k=j-1$, then $[t_{k-1},t_{k}=b]$
is a downtick and we obtain a contradiction of Proposition~\ref{prop:partition-structure-2}(\ref{enu:minmaxends-last}).
Thus, in case~\ref{enu:onemore}, we have $k=j$, completing the
proof for case~\ref{enu:onemore}. On the other hand, if $m_{j}>f(b)-\lambda/2$
and $k=j$, then $[t_{j},t_{j+1}=b]$ is an uptick and $f(t_{j})>f(b)-\lambda/2$,
contradicting Proposition~\ref{prop:partition-structure}(\ref{enu:bigenoughends}).
This completes the proof for case~\ref{enu:nomore}.

Finally, we show that $f(t_{j})=m_{j}$ whenever $1\le j\le k$. Assume
wlog that $[t_{j-1},t_{j}]$ is an uptick. We have shown that $t_{j-1}\le\tau_{j-1}\le t_{j}\le\tau_{j}\wedge b\le t_{j+1}$,
so $f(t_{j})\le\max\limits _{\tau_{j-1}\le s\le\tau_{j}\wedge b}f(s)=m_{j}$,
while Proposition~\ref{prop:partition-structure}(\ref{enu:localminmax})
tells us that $f(t_{j})=\max\limits _{t_{j-1}\le s\le t_{j+1}}f(s)\ge\max\limits _{\tau_{j-1}\le s\le\tau_{j}\wedge b}f(s)=m_{j}.$
Therefore, $f(t_{j})=m_{j}$.
\end{proof}

\section{\label{sec:est-small}Estimating \texorpdfstring{$\Phi_{I,\lambda}(W)$}{Φ\_\{I,λ\}(W)}
for small \texorpdfstring{$\lambda$}{λ}}

Proposition~\ref{prop:part-stoptimes} lets us algorithmically construct
a partition $P$ maximizing $\Phi_{I,\lambda,P}(f)$ for any $f$.
More importantly for our purposes, however, it lets us estimate $\Phi_{I,\lambda,P}(f)$
in terms of the $\tau_{j}$s and $f(\tau_{j})$s, as the next proposition
shows.
\begin{prop}
\label{prop:explicit-formula}Let $\lambda>0$ and let $I=[a,b]$
be an interval. Let $f\in C^{0}[a,b]$, define $\tau_{j}$ as above,
and let $k'=0\vee\max\{j\mid\tau_{j}<b\}$. Then there exists an $\alpha\in\{0,1\}$,
depending only on the values of $f$ on $[a,\tau_{0}\wedge b]$, so
that 
\[
\Phi_{I,\lambda}(f)=\lambda k'+|f(\tau_{0}\wedge b)-f(a)|+\sum_{j\ge1}(-1)^{j+\alpha}(f(\tau_{j}\wedge b)-f(\tau_{j-1}\wedge b))+\max\{0,2|m_{k'+1}-f(b)|-\lambda\}.
\]
\end{prop}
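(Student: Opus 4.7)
The plan is to apply Proposition~\ref{prop:part-stoptimes} to parametrize an optimal partition in terms of the stopping times $\tau_j$, and then evaluate $\Phi_{I,\lambda}(f)$ by direct substitution and telescoping. I would first treat the principal case $\tau_0<b$. Let $\alpha=1$ if $\tau_0$ is an upstop and $\alpha=0$ if $\tau_0$ is a downstop, and set $\epsilon_j=(-1)^{j+\alpha}$, so that $\epsilon_j$ is the sign of $f(t_j)-f(t_{j-1})$ by the uptick/downtick alternation of Proposition~\ref{prop:partition-structure}(\ref{enu:updown}). By the trichotomy in Proposition~\ref{prop:part-stoptimes}, either $k=k'$ (in case~\ref{enu:nomore}) or $k=k'+1$ (in case~\ref{enu:onemore}); in both cases $f(t_j)=m_j$ for $1\le j\le k$, and $m_j=f(\tau_j)+\epsilon_j\lambda$ for $1\le j\le k'$ by Proposition~\ref{prop:mjtj}.

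I would then substitute these identities into
\[
\Phi_{I,\lambda}(f)=\sum_{j=1}^{k+1}\epsilon_j\bigl(f(t_j)-f(t_{j-1})\bigr)-\lambda k
\]
and collect terms. Each $f(\tau_j)$ with $1\le j\le k'$ appears with coefficient $2\epsilon_j$, which together with the boundary contributions at $f(a)$ and $f(b)$ reassembles into the telescoping sum $\sum_{j\ge 1}(-1)^{j+\alpha}(f(\tau_j\wedge b)-f(\tau_{j-1}\wedge b))$. The $\lambda$-shifts in $m_j=f(\tau_j)+\epsilon_j\lambda$ contribute an aggregate $2k'\lambda$ that combines with $-\lambda k$ to yield $\lambda k'$ (if $k=k'$) or $\lambda k'-\lambda$ (if $k=k'+1$). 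The boundary term $(-1)^\alpha f(\tau_0)$ combines with $f(a)$, via $f(\tau_0)=f(a)-(-1)^\alpha\lambda/2$, to produce $|f(\tau_0\wedge b)-f(a)|=\lambda/2$ together with a stray $-\lambda/2$ that cancels elsewhere. In case~\ref{enu:onemore}, the final step $f(t_{k'+1})=m_{k'+1}$ to $f(b)$ contributes an additional $2\epsilon_{k'+1}(m_{k'+1}-f(b))$; since Definition~\ref{def:mjs} forces $\operatorname{sign}(m_{k'+1}-f(b))=\epsilon_{k'+1}$, this equals $2|m_{k'+1}-f(b)|$, and combined with the leftover $-\lambda$ it produces $2|m_{k'+1}-f(b)|-\lambda$, which is nonnegative precisely in case~\ref{enu:onemore} and thus matches the $\max\{0,\cdot\}$ term.

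Finally I would address the degenerate case $\tau_0\ge b$. Here $f$ stays strictly within $(f(a)-\lambda/2,\,f(a)+\lambda/2)$ on $[a,b)$, so Proposition~\ref{prop:partition-structure}(\ref{enu:bigenoughends}) forces $k=0$ and $\Phi_{I,\lambda}(f)=|f(b)-f(a)|$. Choosing $\alpha=1$ if $f(b)\ge f(a)$ and $\alpha=0$ otherwise makes $m_1$ (the max or min of $f$ on $[a,b]$) lie on the same side of $f(a)$ as $f(b)$, so $|m_1-f(b)|\le|m_1-f(a)|<\lambda/2$ and the $\max$ term vanishes; the telescoping sum vanishes because $\tau_j\wedge b=b$ for all $j\ge 0$; and $|f(\tau_0\wedge b)-f(a)|=|f(b)-f(a)|$ completes the identification. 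The main obstacle is the sign bookkeeping in the telescoping step, and in particular verifying $\operatorname{sign}(m_{k'+1}-f(b))=\epsilon_{k'+1}$, which is what converts the last-edge contribution into the stated absolute value.
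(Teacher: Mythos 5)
Your proposal is correct and follows essentially the same route as the paper: both pick an optimal partition of maximal size, invoke Proposition~\ref{prop:part-stoptimes} and Proposition~\ref{prop:mjtj} to identify $f(t_j)=m_j=f(\tau_j)+(-1)^{j+\alpha}\lambda$, and then do algebra on $\sum_j|f(t_j)-f(t_{j-1})|-\lambda k$. The only real difference is presentational: you collect coefficients directly by writing $\epsilon_j=(-1)^{j+\alpha}$ and telescoping, whereas the paper splits the sum into four indicator-weighted pieces (equations (\ref{eq:bigphi1})--(\ref{eq:bigphi3})) and simplifies each separately; your version is arguably cleaner to verify. You also explicitly treat the degenerate case $\tau_0\ge b$, which the paper glosses over (and where, strictly speaking, $m_{k'+1}=m_1$ is not defined by Definition~\ref{def:mjs}, so the formula needs the implicit convention you supply); for the intended application to Brownian motion this case has probability zero, so the omission is harmless, but your care is welcome. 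One small point worth making explicit in a final write-up: the identification of $\alpha$ via ``$\tau_0$ upstop'' versus the paper's ``$[t_0,t_1]$ uptick'' requires knowing these coincide even when $\tau_0<b\le\tau_1$; this follows from Proposition~\ref{prop:partition-structure-2}(\ref{enu:minmaxends-rep}) as in the base case of Proposition~\ref{prop:part-stoptimes}, but neither you nor the paper spells it out.
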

\begin{proof}
Choose a partition $P=[a=t_{0}<t_{1}<\cdots<t_{k+1}=b]$ of $[a,b]$
so that $\Phi_{I,\lambda,P}(f)=\Phi_{I,\lambda}(f)$ and $|P|$ is
maximal among all $P$ with this property. Let $\mathbf{1}_{\ge}=\mathbf{1}_{|m_{k'+1}-f(b)|\ge\lambda/2}$
and $\mathbf{1}_{<}=\mathbf{1}_{|m_{k'+1}-f(b)|<\lambda/2}$. Note
that $k=k'+\mathbf{1}_{\ge}$. Applying Proposition~\ref{prop:part-stoptimes},
we can write
\begin{multline}
\Phi_{I,\lambda}(f)=\Phi_{I,\lambda,P}(f)=\sum\limits _{j=1}^{k+1}|f(t_{j})-f(t_{j-1})|-\lambda k=\sum_{j\ge1}|f(t_{j})-f(t_{j-1})|\mathbf{1}_{j\le k+1}-\lambda k\\
=|m_{1}-f(a)|\mathbf{1}_{k'\ge1}+\sum_{j=2}^{k'}|m_{j}-m_{j-1}|+(|m_{k'+1}-m_{k'}|+|f(b)-m_{k'+1}|)\mathbf{1}_{\ge}\\
+|f(b)-m_{k'}|\mathbf{1}_{<}-\lambda(k'+\mathbf{1}_{\ge}).\label{eq:bigphi}
\end{multline}
Let $\alpha=0$ if $[t_{0},t_{1}]$ is a downtick and $\alpha=1$
if $[t_{0},t_{1}]$ is an uptick, so $j+\alpha$ is even if $[t_{j-1},t_{j}]$
is an uptick and odd if $[t_{j-1},t_{j}]$ is a downtick. Note that
this definition of $\alpha$ only depends on the values of $f$ on
$[a,\tau_{0}\wedge b]$. For $0\le j\le k'$, we have that $j+\alpha$
is even if $\tau_{j}$ is a downstop and odd if $\tau_{j}$ is an
upstop, so, by Proposition~\ref{prop:mjtj}, we can write $m_{j}=f(\tau_{j})+(-1)^{j+\alpha}\lambda$
whenever $1\le j\le k'$.

Now we can simplify each of the pieces of (\ref{eq:bigphi}) in turn.
We have
\begin{multline}
|m_{1}-f(a)|\mathbf{1}_{\tau_{1}<b}=(-1)^{\alpha+1}(m_{1}-f(a))\mathbf{1}_{k'\ge1}=(-1)^{\alpha+1}(f(\tau_{1})+(-1)^{\alpha+1}\lambda-f(a))\mathbf{1}_{k'\ge1}\\
=[(-1)^{\alpha+1}(f(\tau_{1})-f(a))+\lambda]\mathbf{1}_{k'\ge1}.\label{eq:bigphi1}
\end{multline}

We also have
\begin{align*}
|f(b)-m_{k'}| & =(-1)^{k'+\alpha+1}(f(b)-m_{k'})\\
 & =(-1)^{k'+\alpha+1}\left(f(b)-(f(\tau_{k'})+(-1)^{k'+\alpha}\lambda)\mathbf{1}_{k'\ge1}-f(a)\mathbf{1}_{k'=0}\right)\\
 & =(-1)^{k'+\alpha+1}[(f(b)-f(\tau_{k'}))\mathbf{1}_{k'\ge1}+(f(b)-f(a))\mathbf{1}_{k'=0}]+\lambda\mathbf{1}_{k'\ge1}
\end{align*}

Moreover, it is not hard to see that either $m_{k'}\le f(b)\le m_{k'+1}$
or $m_{k'}\ge f(b)\ge m_{k'+1}$. (If $k'=0$ then this is true by
the definitions. If $k'>0$ and, for example, if $\tau_{k'}$ is an
upstop, then $m_{k'+1}=\max\limits _{\tau_{k'}\le s\le\tau_{k'+1}\wedge b}f(s)=\max\limits _{\tau_{k'}\le s\le b}f(s)\ge f(b)$,
and if $f(b)<m_{k'}$, then $f(b)<m_{k'}\le f(\tau_{k'})-\lambda$,
contradicting the definition of $k'$.) This implies
\begin{align*}
|m_{k'+1}-m_{k'}|+|f(b)-m_{k'+1}| & =2|m_{k'+1}-f(b)|+|f(b)-m_{k'}|.
\end{align*}
Therefore,
\begin{align}
(|m_{k'+1}-m_{k'}|+\mathrlap{|f(b)-m_{k'+1}|)\mathbf{1}_{\ge}+|f(b)-m_{k'}|\mathbf{1}_{<}}\nonumber \\
 & =(2|m_{k'+1}-f(b)|+|f(b)-m_{k'}|)\mathbf{1}_{\ge}+|f(b)-m_{k'}|\mathbf{1}_{<}\nonumber \\
 & =2|m_{k'+1}-f(b)|\mathbf{1}_{\ge}+(-1)^{k'+\alpha+1}[(f(b)-f(\tau_{k'}))\mathbf{1}_{k'\ge1}+(f(b)-f(a))\mathbf{1}_{k'=0}]+\lambda\mathbf{1}_{k'\ge1}.\label{eq:bigphi2}
\end{align}

Finally, we can write
\begin{multline}
\sum_{j=2}^{k'}|m_{j}-m_{j-1}|=\sum_{j=2}^{k'}(-1)^{j+\alpha}(m_{j}-m_{j-1})=\sum_{j=2}^{k'}(-1)^{j+\alpha}(f(\tau_{j})+(-1)^{j+\alpha}\lambda-(f(\tau_{j-1})-(-1)^{j+\alpha}\lambda))\\
=2\lambda(k'-1+\mathbf{1}_{k'=0})+\sum_{j\ge2}(-1)^{j+\alpha}(f(\tau_{j})-f(\tau_{j-1}))\mathbf{1}_{k'\ge j}.\label{eq:bigphi3}
\end{multline}

Substituting (\ref{eq:bigphi1}), (\ref{eq:bigphi2}), and (\ref{eq:bigphi3})
into (\ref{eq:bigphi}), we obtain
\begin{align*}
\Phi_{I,\lambda}(f) & =\left[\lambda+(-1)^{\alpha+1}(f(\tau_{1})-f(a))\right]\mathbf{1}_{k'\ge1}+2\lambda(k'-1+\mathbf{1}_{k'=0})+\sum_{j\ge2}(-1)^{j+\alpha}(f(\tau_{j})-f(\tau_{j-1}))\mathbf{1}_{k'\ge j}\\
 & \qquad+2|m_{k'+1}-f(b)|\mathbf{1}_{\ge}+(-1)^{k'+\alpha+1}[(f(b)-f(\tau_{k'}))\mathbf{1}_{k'\ge1}+(f(b)-f(a))\mathbf{1}_{k'=0}]+\lambda\mathbf{1}_{k'\ge1}-\lambda(k'+\mathbf{1}_{\ge})\\
 & =\lambda k'+(-1)^{\alpha+1}(f(\tau_{1})-f(a))\mathbf{1}_{k'\ge1}+\sum_{j\ge2}(-1)^{j+\alpha}(f(\tau_{j})-f(\tau_{j-1}))\mathbf{1}_{k'\ge j}\\
 & \qquad+(2|m_{k'+1}-f(b)|-\lambda)\mathbf{1}_{\ge}+(-1)^{k'+\alpha+1}[(f(b)-f(\tau_{k'}))\mathbf{1}_{k'\ge1}+(f(b)-f(a))\mathbf{1}_{k'=0}]\\
 & =\lambda k'+(-1)^{\alpha+1}(f(\tau_{1})-f(a))\mathbf{1}_{k'\ge1}+(-1)^{k'+\alpha+1}(f(b)-f(\tau_{k'}))\mathbf{1}_{k'\ge1}+(-1)^{\alpha+1}(f(b)-f(a))\mathbf{1}_{k'=0}\\
 & \qquad+\sum_{j\ge2}(-1)^{j+\alpha}(f(\tau_{j})-f(\tau_{j-1}))\mathbf{1}_{\tau_{j}<b}+\max\{0,2|m_{k'+1}-f(b)|-\lambda\}\\
 & =\lambda k'+(-1)^{\alpha+1}(f(\tau_{1}\wedge b)-f(a))+\sum_{j\ge2}(-1)^{j+\alpha}(f(\tau_{j}\wedge b)-f(\tau_{j-1}\wedge b))+\max\{0,2|m_{k'+1}-f(b)|-\lambda\}\\
 & =\lambda k'+(-1)^{\alpha+1}(f(\tau_{1}\wedge b)-f(\tau_{0}\wedge b)+f(\tau_{0}\wedge b)-f(a))\\
 & \qquad+\sum_{j\ge2}(-1)^{j+\alpha}(f(\tau_{j}\wedge b)-f(\tau_{j-1}\wedge b))+\max\{0,2|m_{k'+1}-f(b)|-\lambda\}\\
 & =\lambda k'+|f(\tau_{0}\wedge b)-f(a)|+\sum_{j\ge1}(-1)^{j+\alpha}(f(\tau_{j}\wedge b)-f(\tau_{j-1}\wedge b))+\max\{0,2|m_{k'+1}-f(b)|-\lambda\},
\end{align*}
as claimed.
\end{proof}

\begin{prop}
\label{prop:interarrival-analysis}Let $\lambda>0$ and $I=[a,b]$
be an interval. Let $\{W_{t}\}_{t\ge a}$ be a standard Brownian motion
defined on the time interval $[a,b]$. Let $\{\mathcal{F}_{t}\}_{t\ge a}$
be the natural filtration of $\{W_{t}\}$. Then $\tau_{0}<\tau_{1}<\cdots$,
defined as in Definition~\ref{def:taus} for $W$, are almost-surely
finite stopping times with respect to the filtration $\{\mathcal{F}_{t}\}_{t\ge a}$.
Moreover, $\{\tau_{j}-\tau_{j-1}\}_{j\ge1}$ is an iid collection
of random variables and $\mathbf{E}(\tau_{j}-\tau_{j-1})=\lambda^{2}$
for each $j\ge1$. Finally, $\mathbf{E}\tau_{0}=(\lambda/2)^{2}$.\end{prop}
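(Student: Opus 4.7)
My plan has three parts: verify the stopping-time and finiteness statements, extract the iid structure from the strong Markov property, and finally compute the two expectations.

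\emph{Stopping times and finiteness.} The times $\tau_0^{\mathrm{up}}$ and $\tau_0^{\mathrm{down}}$ are hitting times of the closed levels $W_a\pm\lambda/2$ by the continuous adapted process $W$, so both are $\{\mathcal{F}_t\}$-stopping times, and hence so is $\tau_0=\tau_0^{\mathrm{up}}\wedge\tau_0^{\mathrm{down}}$. Inductively, given that $\tau_{j-1}$ is a stopping time, $\tau_j$ is the first time after $\tau_{j-1}$ at which the continuous adapted process $s\mapsto\max_{\tau_{j-1}\le r\le s}W_r-W_s$ (or its drawup analogue) reaches $\lambda$, and is therefore also a stopping time. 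Almost-sure finiteness uses that one-dimensional Brownian motion satisfies $\limsup_{t\to\infty}W_t=+\infty$ and $\liminf_{t\to\infty}W_t=-\infty$: the exit time defining $\tau_0$ and the drawdown (resp.\ drawup) times defining $\tau_j-\tau_{j-1}$ are then all a.s.\ finite.

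\emph{IID structure.} For the iid property of $\{\tau_j-\tau_{j-1}\}_{j\ge1}$, I apply the strong Markov property at $\tau_{j-1}$: the shifted process $\tilde W_t:=W_{\tau_{j-1}+t}-W_{\tau_{j-1}}$ is a standard Brownian motion independent of $\mathcal{F}_{\tau_{j-1}}$. By the inductive definition, $\tau_j-\tau_{j-1}$ equals the first $t$ such that $\max_{0\le s\le t}\tilde W_s-\tilde W_t\ge\lambda$ if $\tau_{j-1}$ is an upstop, and the first $t$ such that $\tilde W_t-\min_{0\le s\le t}\tilde W_s\ge\lambda$ if $\tau_{j-1}$ is a downstop. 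Since $-\tilde W$ is also a standard Brownian motion, these two times have the same law; hence the distribution of $\tau_j-\tau_{j-1}$ does not depend on the type of $\tau_{j-1}$, and the strong Markov independence then upgrades to full independence from $\mathcal{F}_{\tau_{j-1}}$, yielding the iid claim.

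\emph{Expectations.} The quantity $\tau_0$ (or $\tau_0-a$, if one does not identify $W_a$ with $0$) is the exit time of a standard Brownian motion from the symmetric interval $(-\lambda/2,\lambda/2)$, so applying optional stopping to $W_{t\wedge\tau_0}^2-(t\wedge\tau_0)$ and letting $t\to\infty$ (using that the stopped process is bounded to justify dominated convergence) yields $\mathbf{E}\tau_0=(\lambda/2)^2$. For $\mathbf{E}(\tau_j-\tau_{j-1})$ with $j\ge1$, the reduction above makes it suffice to compute the expectation of the first time $D$ at which the drawdown $M_t-\tilde W_t:=\max_{0\le s\le t}\tilde W_s-\tilde W_t$ of a standard BM $\tilde W$ reaches $\lambda$. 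By L\'evy's theorem, the drawdown process $(M_t-\tilde W_t)_{t\ge0}$ has the same law as $(|\tilde W_t|)_{t\ge0}$, so $D$ has the same law as the exit time of $\tilde W$ from $(-\lambda,\lambda)$, which by the same optional-stopping argument has expectation $\lambda^2$. The main delicate point is precisely this drawdown expectation, because a direct optional-stopping argument on $W$ at $\tau_j$ is unavailable (the position $W_{\tau_j}$ is not bounded); L\'evy's identification cleanly converts it into a symmetric exit-time problem. As an alternative one can instead apply It\^o's formula to $(M_t-\tilde W_t)^2-t$, using that $dM_t$ is supported on $\{M_t=\tilde W_t\}$ to see that this is a local martingale, and then localize in the usual way.
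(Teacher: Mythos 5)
Your proposal is correct and follows essentially the same route as the paper: reduce the interarrival law to a drawdown time via strong Markov and sign symmetry, invoke the reflection-principle/L\'evy identification of the drawdown process with $|W|$, and compute the exit-time expectations by optional stopping on $W_t^2-t$. You spell out a few steps the paper compresses (the explicit inductive stopping-time argument, a.s.\ finiteness via the $\limsup/\liminf$ behavior of $W$, and why the type of $\tau_{j-1}$ does not affect the distribution), and you correctly flag the small abuse of notation $\mathbf{E}\tau_0$ versus $\mathbf{E}(\tau_0-a)$, but these are elaborations rather than a different method; the It\^o-formula argument you mention is only offered as an aside.
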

\begin{proof}
The fact that each $\tau_{j}$ is a stopping time is clear from the
definition. That $\{\tau_{j}-\tau_{j-1}\}_{j\ge1}$ are iid follows
immediately from the definition, given the strong Markov property
of Brownian motion and the fact that the negative of a standard Brownian
motion is another standard Brownian motion.

For $j\ge1$, to prove that $\tau_{j}-\tau_{j-1}$ is almost-surely
finite and to compute $\mathbf{E}(\tau_{j}-\tau_{j-1})$, we note
that, by the strong Markov property and the fact that a negative of
a Brownian motion is another Brownian motion, $\tau_{j}-\tau_{j-1}$
has the same distribution as the stopping time $\rho=\min\{t>a\mid Y_{t}\ge\lambda\},$
where $Y_{t}=\max\limits _{a\le s\le t}W_{s}-W_{t}$. %
Using the reflection principle, it can be shown that the process $\{Y_{t}\}_{t\ge a}$
has the same finite-dimensional distributions as the process $\{|W_{t}|\}_{t\ge a}$~\cite[Problem 2.8.8 or Theorem 3.6.17]{karshr}.
Let $\tilde{\rho}=\min\{t>a\mid|W_{t}|\ge\lambda\}$. Since $\{Y_{t}\}_{t\ge a}$
and $\{W_{t}\}_{t\ge a}$ have continuous sample paths, $\rho$ and
$\tilde{\rho}$ are both measurable with respect to the $\sigma$-algebras
generated by finite projections of $\{Y_{t}\}_{t\ge a}$ and $\{W_{t}\}_{t\ge a}$,
respectively, and so the distributions of $\rho$ and $\tilde{\rho}$
are the same since the finite-dimensional distributions of $\{Y_{t}\}_{t\ge a}$
and $\{W_{t}\}_{t\ge a}$ are the same. The fact that $\tilde{\rho}$
is almost-surely finite is the standard fact that Brownian motion
is almost-surely unbounded, and the computation $\mathbf{E}\tilde{\rho}=\lambda^{2}$
is a standard application of the optional sampling theorem on the
martingale $W_{t}^{2}-t$. Therefore, $\rho$ is almost-surely finite
and $\mathbf{E}\rho=\lambda^{2}$. %
We obtain $\mathbf{E}\tau_{0}=(\lambda/2)^{2}$ in the same way. %
{} \end{proof}
\begin{lem}
\label{lem:ui}With setup as in Proposition~\ref{prop:interarrival-analysis},
the sequence
\[
\left\{ \sum_{j=1}^{N}(-1)^{j+\alpha}(W_{\tau_{j}\wedge b}-W_{\tau_{j-1}\wedge b})\right\} _{N\ge0}
\]
 is bounded in $L^{2}$ by $b$ (and hence is uniformly integrable).\end{lem}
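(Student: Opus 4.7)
The plan is to exploit the fact that the partial sums form a martingale in $N$. Write $\epsilon_j = (-1)^{j+\alpha}$ and $S_N = \sum_{j=1}^{N}\epsilon_j(W_{\tau_j\wedge b}-W_{\tau_{j-1}\wedge b})$. By Proposition~\ref{prop:explicit-formula}, $\alpha$ depends only on the values of $W$ on $[a,\tau_0\wedge b]$ and is therefore $\mathcal{F}_{\tau_0\wedge b}$-measurable, so each $\epsilon_j$ is $\mathcal{F}_{\tau_{j-1}\wedge b}$-measurable. Each $\tau_j\wedge b$ is a bounded stopping time, so the optional stopping theorem applied to the martingale $W$ shows that $(W_{\tau_j\wedge b})_{j\geq 0}$ is a martingale with respect to the discrete filtration $(\mathcal{F}_{\tau_j\wedge b})_{j\geq 0}$. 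Consequently $(S_N)_{N\geq 0}$ is itself a martingale.

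Next, I would compute the second moment by orthogonality of martingale increments. Since $\epsilon_j^2=1$ and cross-terms $\mathbf{E}[\epsilon_j\epsilon_k(W_{\tau_j\wedge b}-W_{\tau_{j-1}\wedge b})(W_{\tau_k\wedge b}-W_{\tau_{k-1}\wedge b})]$ for $j<k$ vanish upon conditioning on $\mathcal{F}_{\tau_{k-1}\wedge b}$,
\begin{equation*}
\mathbf{E}[S_N^2]=\sum_{j=1}^{N}\mathbf{E}\bigl[(W_{\tau_j\wedge b}-W_{\tau_{j-1}\wedge b})^2\bigr].
\end{equation*}
To evaluate each summand, apply the optional stopping theorem to the martingale $W_t^2-t$ at the bounded stopping times $\tau_{j-1}\wedge b\leq\tau_j\wedge b$ (equivalently, use the $L^2$-isometry for stochastic integrals), which yields
\begin{equation*}
\mathbf{E}\bigl[(W_{\tau_j\wedge b}-W_{\tau_{j-1}\wedge b})^2\bigr]=\mathbf{E}[\tau_j\wedge b-\tau_{j-1}\wedge b].
\end{equation*}

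Summing, the telescoping identity gives
\begin{equation*}
\mathbf{E}[S_N^2]=\mathbf{E}[\tau_N\wedge b-\tau_0\wedge b]\leq b,
\end{equation*}
which is the desired uniform bound in $L^2$; uniform integrability then follows from the standard fact that any sequence bounded in $L^2$ is uniformly integrable. There is no real obstacle here. The only subtle point is recognizing that $\epsilon_j$ is predictable in the discrete filtration $(\mathcal{F}_{\tau_j\wedge b})$—which is exactly why Proposition~\ref{prop:explicit-formula} was stated with emphasis on $\alpha$ being determined by the early portion of the trajectory—after which the argument reduces to the standard orthogonality computation for Brownian stopping.
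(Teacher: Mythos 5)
Your proof is correct and follows essentially the same route as the paper: compute $\mathbf{E}[S_N^2]$ by orthogonality of the increments, evaluate each diagonal term via optional stopping on $W_t^2-t$, and telescope. The only difference is cosmetic — you package the orthogonality as a martingale-transform argument (noting $\epsilon_j$ is predictable), while the paper kills the cross terms directly via independence and mean zero of the increments; both use the same fact $\mathbf{E}(W_{\tau_j\wedge b}-W_{\tau_{j-1}\wedge b})^2=\mathbf{E}(\tau_j\wedge b-\tau_{j-1}\wedge b)$ and the same telescoping bound.
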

\begin{proof}
For each $j\ge1$, we know that $\tau_{j-1}\wedge b$ is a stopping
time, so by the strong Markov property of Brownian motion, $\left\{ (W_{t+\tau_{j-1}\wedge b}-W_{\tau_{j-1}\wedge b})^{2}-t\right\} _{t\ge0}$
is a martingale with respect to the filtration $\{\mathcal{F}_{t+\tau_{j-1}\wedge b}\}_{t\ge0}$.
Since $\tau_{j}\wedge b-\tau_{j-1}\wedge b$ is a bounded stopping
time with respect to this filtration, we have that $\mathbf{E}(W_{\tau_{j}\wedge b}-W_{\tau_{j-1}\wedge b})^{2}=\mathbf{E}(\tau_{j}\wedge b-\tau_{j-1}\wedge b)$
by the optional stopping theorem. Moreover, if $j\ne j'$, then $W_{\tau_{j}\wedge b}-W_{\tau_{j-1}\wedge b}$
and $W_{\tau_{j'}\wedge b}-W_{\tau_{j'-1}\wedge b}$ have mean zero,
and are independent by the strong Markov property. So
\begin{multline*}
\mathbf{E}\left[\sum_{j=1}^{N}(-1)^{j+\alpha}(W_{\tau_{j}\wedge b}-W_{\tau_{j-1}\wedge b})\right]^{2}=\\
=\sum_{j=1}^{N}\mathbf{E}(W_{\tau_{j}\wedge b}-W_{\tau_{j-1}\wedge b})^{2}+\sum_{1\le j\ne j'\le N}(-1)^{j+j'}\mathbf{E}\left[(W_{\tau_{j}\wedge b}-W_{\tau_{j-1}\wedge b})(W_{\tau_{j'}\wedge b}-W_{\tau_{j'-1}\wedge b})\right]=\\
=\sum_{j=1}^{N}\mathbf{E}(\tau_{j}\wedge b-\tau_{j-1}\wedge b)=\mathbf{E}(\tau_{N}\wedge b-\tau_{0}\wedge b)\le b.\qedhere
\end{multline*}
\end{proof}
\begin{lem}
\label{lem:mg-vanish}With setup as in Proposition~\ref{prop:interarrival-analysis},
we have $\mathbf{E}\sum\limits _{j\ge1}(-1)^{j+\alpha}(W_{\tau_{j}\wedge b}-W_{\tau_{j-1}\wedge b})=0.$\end{lem}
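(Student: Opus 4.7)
The plan is to recognize that the sum in question is a martingale whose partial sums have mean zero, and then to use Lemma~\ref{lem:ui} to justify exchanging the expectation with the limit.

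First I would record the crucial measurability fact: by Proposition~\ref{prop:explicit-formula}, $\alpha$ depends only on the values of $W$ on $[a,\tau_{0}\wedge b]$, so $\alpha$ is $\mathcal{F}_{\tau_{0}\wedge b}$-measurable, and in particular $\mathcal{F}_{\tau_{j-1}\wedge b}$-measurable for every $j\ge1$. Next I would show that each individual term has mean zero. Fix $j\ge1$ and note that $\tau_{j-1}\wedge b$ and $\tau_{j}\wedge b$ are both stopping times bounded by $b$. By the strong Markov property applied at $\tau_{j-1}\wedge b$, the process $\{W_{t+\tau_{j-1}\wedge b}-W_{\tau_{j-1}\wedge b}\}_{t\ge 0}$ is a Brownian motion independent of $\mathcal{F}_{\tau_{j-1}\wedge b}$, and $\tau_{j}\wedge b-\tau_{j-1}\wedge b$ is a bounded stopping time for its natural filtration. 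By optional stopping,
\[
\mathbf{E}\bigl[W_{\tau_{j}\wedge b}-W_{\tau_{j-1}\wedge b}\,\big|\,\mathcal{F}_{\tau_{j-1}\wedge b}\bigr]=0.
\]
Pulling out the $\mathcal{F}_{\tau_{j-1}\wedge b}$-measurable factor $(-1)^{j+\alpha}$ then gives
\[
\mathbf{E}\bigl[(-1)^{j+\alpha}(W_{\tau_{j}\wedge b}-W_{\tau_{j-1}\wedge b})\bigr]=0
\]
for every $j\ge1$, so the partial sums $M_{N}=\sum_{j=1}^{N}(-1)^{j+\alpha}(W_{\tau_{j}\wedge b}-W_{\tau_{j-1}\wedge b})$ satisfy $\mathbf{E}M_{N}=0$.

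To conclude, I need to pass the expectation through the limit. By Proposition~\ref{prop:interarrival-analysis}, $\tau_{j}-\tau_{j-1}$ are iid with positive mean $\lambda^{2}$, so $\tau_{j}\to\infty$ almost surely; hence with probability one there is some (random) $N_{0}$ such that $\tau_{j}\wedge b=b$ for all $j\ge N_{0}$, which makes $M_{N}$ eventually constant and so $M_{N}$ converges almost surely to $\sum_{j\ge 1}(-1)^{j+\alpha}(W_{\tau_{j}\wedge b}-W_{\tau_{j-1}\wedge b})$. Lemma~\ref{lem:ui} shows $\{M_{N}\}$ is $L^{2}$-bounded and therefore uniformly integrable, so almost-sure convergence upgrades to $L^{1}$ convergence, giving
\[
\mathbf{E}\sum_{j\ge1}(-1)^{j+\alpha}(W_{\tau_{j}\wedge b}-W_{\tau_{j-1}\wedge b})=\lim_{N\to\infty}\mathbf{E}M_{N}=0.
\]

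The only mildly delicate step is the measurability of $\alpha$, which prevents the naive attempt to apply optional stopping directly to a single Brownian motion starting at time $a$; without noting that $\alpha$ is known by time $\tau_{0}\wedge b$, one could not pull $(-1)^{j+\alpha}$ outside the conditional expectation. Once that observation is made, the rest is a standard martingale/uniform integrability argument.
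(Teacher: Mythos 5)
Your proof is correct and follows essentially the same route as the paper: both observe that $\alpha$ is $\mathcal{F}_{\tau_{0}\wedge b}$-measurable, use the strong Markov property and optional stopping to show each increment has mean zero, and invoke the $L^{2}$-boundedness from Lemma~\ref{lem:ui} to pass to the limit. The only cosmetic difference is that the paper factors the expectation via independence of the increment from $\mathcal{F}_{\tau_{0}\wedge b}$, whereas you condition on $\mathcal{F}_{\tau_{j-1}\wedge b}$; you also spell out the almost-sure convergence of the partial sums, which the paper leaves implicit.
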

\begin{proof}
Note that $\alpha\in\mathcal{F}_{\tau_{0}\wedge b}$ by Proposition~\ref{prop:explicit-formula}.
Moreover, by the strong Markov property, $W_{\tau_{j}\wedge b}-W_{\tau_{j-1}\wedge b}$
is independent of $\mathcal{F}_{\tau_{0}\wedge b}$ for each $j\ge1$.
So for each $j\ge1$ we have
\begin{align*}
\mathbf{E}\left(-1\right)^{j+\alpha}(W_{\tau_{j}\wedge b}-W_{\tau_{j-1}\wedge b}) & =\mathbf{E}\left(-1\right)^{j+\alpha}\mathbf{E}\left[W_{\tau_{j}\wedge b}-W_{\tau_{j-1}\wedge b}\right]=0.
\end{align*}
Combined with Lemma~\ref{lem:ui}, this implies that $\mathbf{E}\sum\limits _{j\ge1}(-1)^{j+\alpha}(W_{\tau_{j}\wedge b}-W_{\tau_{j-1}\wedge b})=0,$
as claimed.\end{proof}
\begin{prop}
If $\{W_{t}\}_{t\ge0}$ is a standard Brownian motion, then 
\[
\mathbf{E}\Phi_{[0,b],\lambda}(W)\sim\frac{b}{\lambda}
\]
as $b\to\infty$.\end{prop}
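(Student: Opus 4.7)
The plan is to apply Proposition~\ref{prop:explicit-formula} to decompose $\Phi_{[0,b],\lambda}(W)$, take expectations using Lemma~\ref{lem:mg-vanish} to eliminate the martingale-like sum, and then invoke the elementary renewal theorem on the delayed renewal process $\{\tau_j\}$ provided by Proposition~\ref{prop:interarrival-analysis}.

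Concretely, Proposition~\ref{prop:explicit-formula} applied to $W$ on $[0,b]$ gives
\[
\Phi_{[0,b],\lambda}(W) = \lambda k' + |W_{\tau_0 \wedge b}| + \sum_{j \ge 1}(-1)^{j+\alpha}\bigl(W_{\tau_j \wedge b} - W_{\tau_{j-1}\wedge b}\bigr) + \max\bigl\{0,\, 2|m_{k'+1} - W_b| - \lambda\bigr\}.
\]
Taking expectations, the signed sum contributes $0$ by Lemma~\ref{lem:mg-vanish}. The boundary term $|W_{\tau_0 \wedge b}|$ is bounded pointwise by $\lambda/2$: equality holds if $\tau_0 < b$ by the definition of $\tau_0$, while $\tau_0 \ge b$ forces $|W_t| < \lambda/2$ for every $t \in [0,b]$ by the same definition. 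Similarly, since $\tau_{k'+1} \ge b$, the relevant interval ending at $b$ contains neither a $\lambda$-uptick nor a $\lambda$-downtick of the sign required by Definition~\ref{def:taus}, which forces $|m_{k'+1} - W_b| \le \lambda$ and hence $\max\{0,\, 2|m_{k'+1} - W_b| - \lambda\} \le \lambda$. Consequently
\[
\mathbf{E}\Phi_{[0,b],\lambda}(W) = \lambda\,\mathbf{E}k' + R(b), \qquad |R(b)| \le 3\lambda/2,
\]
uniformly in $b$.

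By Proposition~\ref{prop:interarrival-analysis}, $\tau_0 < \tau_1 < \cdots$ forms a delayed renewal process, with delay mean $(\lambda/2)^2$ and iid interarrival times of mean $\lambda^2$; the variable $k' = \max\{j \ge 0 : \tau_j < b\}$ is the corresponding counting process. The elementary renewal theorem (delayed version) therefore gives $\mathbf{E}k'/b \to 1/\lambda^2$ as $b \to \infty$. Combining with the previous display,
\[
\frac{\mathbf{E}\Phi_{[0,b],\lambda}(W)}{b/\lambda} = \lambda^2 \cdot \frac{\mathbf{E}k'}{b} + \frac{\lambda R(b)}{b} \xrightarrow[b\to\infty]{} 1,
\]
which is the claimed asymptotic.

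The main obstacle is the pointwise bound on the final boundary term, which requires reading off from Definitions~\ref{def:taus} and~\ref{def:mjs} how $m_{k'+1}$ is constrained once $\tau_{k'+1} \ge b$ is known (and, in particular, checking the edge case $k'=0$); all remaining steps are direct applications of results already in the paper together with the classical renewal theorem.
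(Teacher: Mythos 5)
Your proof follows the paper's own argument step for step: decompose via Proposition~\ref{prop:explicit-formula}, kill the signed telescoping sum via Lemma~\ref{lem:mg-vanish}, bound the two boundary terms by constants, and apply the elementary renewal theorem with Proposition~\ref{prop:interarrival-analysis} to get $\mathbf{E}k'/b\to 1/\lambda^{2}$. The only difference is that you spell out the pointwise bounds on the boundary terms a little more explicitly than the paper does; the substance is identical.
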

\begin{proof}
Let $I=[0,b]$, and set notation as in Proposition~\ref{prop:explicit-formula}
with $f(t)=W_{t}$. Proposition~\ref{prop:explicit-formula} then
tells us that
\begin{align*}
\mathbf{E}\Phi_{I,\lambda}(W) & =\lambda\mathbf{E}k'(b)+\mathbf{E}|W_{\tau_{0}\wedge b}|+\mathbf{E}\sum_{j\ge1}(-1)^{j+\alpha}(W_{\tau_{j}\wedge b}-W_{\tau_{j-1}\wedge b})+\mathbf{E}\max\{0,2|m_{k'(b)+1}-W_{b}|-\lambda\},
\end{align*}
where $k'(b)=0\vee\max\{j\mid\tau_{j}<b\}$. Elementary renewal theory
and Proposition~\ref{prop:interarrival-analysis} then let us write
\[
\lim_{b\to\infty}\frac{\mathbf{E}k'(b)}{b}=\frac{1}{\mathbf{E}(\tau_{j}-\tau_{j-1})}=\frac{1}{\lambda^{2}}.
\]
Moreover, Lemma~\ref{lem:mg-vanish} says that $\mathbf{E}\sum\limits _{j\ge1}(-1)^{j+\alpha}(W_{\tau_{j}\wedge b}-W_{\tau_{j-1}\wedge b})=0.$
We note that
\[
0\le\mathbf{E}|W_{\tau_{0}\wedge b}|\le\lambda/2,
\]
and
\[
|\mathbf{E}\max\{0,2|m_{k'(b)+1}-W_{b}|-\lambda\}|\le\mathbf{E}|\max\{0,2|m_{k'(b)+1}-W_{b}|-\lambda\}|<\lambda
\]
by the definition of $k'$. Therefore, we have
\[
\lim_{b\to\infty}\frac{1}{b}\mathbf{E}\Phi_{I,\lambda}(W)=\frac{\lambda}{\lambda^{2}}=\frac{1}{\lambda},
\]
as claimed.
\end{proof}

\begin{prop}
\label{prop:scaling}Let $b\ge0$ and $\mu>0$, and let $\{W_{t}\}_{t\ge0}$
be a standard Brownian motion. Then $\Phi_{[0,\mu b],\lambda}(W)$
has the same law as $\sqrt{\mu}\Phi_{[0,b],\lambda/\sqrt{\mu}}(W)$.
In particular, $\mathbf{E}\Phi_{[0,\mu b],\lambda}(W)=\sqrt{\mu}\mathbf{E}\Phi_{[0,b],\lambda/\sqrt{\mu}}(W).$\end{prop}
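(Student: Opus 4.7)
The plan is to combine a bijective reparametrization of partitions with Brownian scaling. First I would observe that the map $t \mapsto t/\mu$ gives a bijection between partitions $P = [0 = t_0 < t_1 < \cdots < t_{k+1} = \mu b]$ of $[0,\mu b]$ and partitions $Q = [0 = s_0 < s_1 < \cdots < s_{k+1} = b]$ of $[0,b]$, preserving $|P| = |Q| = k$, via $s_i = t_i/\mu$.

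Next, let $\tilde W_s := \mu^{-1/2} W_{\mu s}$; by Brownian scaling, $\{\tilde W_s\}_{s \ge 0}$ is again a standard Brownian motion, and $W_{\mu s} = \sqrt{\mu}\,\tilde W_s$. Under the reparametrization above, I would compute
\[
\sum_{i=1}^{k+1}|W_{t_i} - W_{t_{i-1}}| - \lambda k = \sqrt{\mu}\sum_{i=1}^{k+1}|\tilde W_{s_i} - \tilde W_{s_{i-1}}| - \lambda k = \sqrt{\mu}\left(\sum_{i=1}^{k+1}|\tilde W_{s_i} - \tilde W_{s_{i-1}}| - \frac{\lambda}{\sqrt{\mu}}\,k\right),
\]
so that $\Phi_{[0,\mu b],\lambda,P}(W) = \sqrt{\mu}\,\Phi_{[0,b],\lambda/\sqrt{\mu},Q}(\tilde W)$. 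Taking the supremum over $P$ (equivalently, over $Q$) gives the pathwise identity
\[
\Phi_{[0,\mu b],\lambda}(W) = \sqrt{\mu}\,\Phi_{[0,b],\lambda/\sqrt{\mu}}(\tilde W).
\]

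Finally, since $\tilde W$ has the same law as $W$, the random variable on the right has the same distribution as $\sqrt{\mu}\,\Phi_{[0,b],\lambda/\sqrt{\mu}}(W)$, which yields the claimed equality in law; the expectation identity then follows by taking expectations. There is no real obstacle here: the only thing to be careful about is verifying that the supremum-preserving bijection commutes with the scaling factor pulled out of the sum, which is immediate from the computation above.
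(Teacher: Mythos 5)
Your proof is correct and follows essentially the same route as the paper: reparametrize the partition by $t\mapsto t/\mu$, pull the factor $\sqrt{\mu}$ out of the objective, and invoke Brownian scaling via $\tilde W_s = \mu^{-1/2}W_{\mu s}$. You spell out the bijection between partitions a bit more explicitly, but the underlying argument is identical to the one in the paper.
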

\begin{proof}
This follows from Brownian scaling by the simple computation
\begin{align*}
\Phi_{[0,\mu b],\lambda}(W) & =\max\limits _{k\ge0}\max\limits _{0=t_{0}<\cdots<t_{k+1}=\mu b}\left(\sum\limits _{i=1}^{k+1}|W_{t_{i}}-W_{t_{i-1}}|-\lambda k\right)\\
 & =\sqrt{\mu}\cdot\max\limits _{k\ge0}\max\limits _{0=s_{0}<\cdots<s_{k+1}=b}\left(\sum\limits _{i=1}^{k+1}\left|\frac{1}{\sqrt{\mu}}W_{\mu s_{i}}-\frac{1}{\sqrt{\mu}}W_{\mu s_{i-1}}\right|-\frac{\lambda}{\sqrt{\mu}}\cdot k\right)\\
 & =\sqrt{\mu}\Phi_{[0,b],\lambda/\sqrt{\mu}}(\tilde{W}),
\end{align*}
where $\tilde{W}_{t}=W_{\mu t}/\sqrt{\mu}$. But by Brownian scaling,
$\tilde{W}_{t}$ is another standard Brownian motion, so $\Phi_{[0,\mu b],\lambda}(W)$
has the same law as $\sqrt{\mu}\Phi_{[0,b],\lambda/\sqrt{\mu}}(W)$.\end{proof}
\begin{cor}
\label{cor:limit}If $\{W_{t}\}_{t\ge0}$ is a standard Brownian motion,
then
\[
\mathbf{E}\Phi_{[0,1],\lambda}(W)\sim1/\lambda
\]
as $\lambda\downarrow0$.\end{cor}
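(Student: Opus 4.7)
The plan is to combine the asymptotic for large $b$ with the Brownian scaling identity to convert a $b\to\infty$ statement into a $\lambda\to 0$ statement at fixed length $1$. The previous proposition tells us that $\mathbf{E}\Phi_{[0,b],1}(W) \sim b$ as $b\to\infty$; meanwhile Proposition~\ref{prop:scaling} lets us trade time rescaling for rescaling of the penalty parameter. The two facts should fit together essentially by substitution, so I do not expect any real obstacle here — the only care needed is to make sure the chosen change of variables sends $b\to\infty$ to $\lambda\to 0$ in a compatible way.

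Concretely, I would apply Proposition~\ref{prop:scaling} with the base length $b$ replaced by $1$, the base penalty $\lambda$ replaced by $1$, and scale factor $\mu = 1/\lambda^{2}$. Since $\sqrt{\mu}=1/\lambda$ and $1/\sqrt{\mu} = \lambda$, the scaling identity becomes
\[
\mathbf{E}\Phi_{[0,1/\lambda^{2}],1}(W) \;=\; \frac{1}{\lambda}\,\mathbf{E}\Phi_{[0,1],\lambda}(W).
\]
Rearranging, $\mathbf{E}\Phi_{[0,1],\lambda}(W) = \lambda\,\mathbf{E}\Phi_{[0,1/\lambda^{2}],1}(W)$.

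As $\lambda\downarrow 0$, the length $1/\lambda^{2}\to\infty$, so the preceding proposition (applied with penalty parameter $1$) gives $\mathbf{E}\Phi_{[0,1/\lambda^{2}],1}(W) \sim 1/\lambda^{2}$. Substituting back,
\[
\mathbf{E}\Phi_{[0,1],\lambda}(W) \;=\; \lambda\cdot\mathbf{E}\Phi_{[0,1/\lambda^{2}],1}(W) \;\sim\; \lambda\cdot\frac{1}{\lambda^{2}} \;=\; \frac{1}{\lambda},
\]
which is the claimed equivalence. So the corollary is essentially just the scaling identity applied to the asymptotic of the preceding proposition; there is no technical obstacle beyond recording the change of variables.
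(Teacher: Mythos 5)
Your proof is correct and follows exactly the same route as the paper: both apply Proposition~\ref{prop:scaling} with $\mu = 1/\lambda^{2}$ to reduce to the large-$b$ asymptotic $\mathbf{E}\Phi_{[0,b],1}(W)\sim b$ from the preceding proposition. The only cosmetic difference is that the paper packages the change of variables into a one-line chain of limits rather than spelling out the substitution.
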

\begin{proof}
An easy computation from the previous two propositions:
\[
\lim_{\lambda\downarrow0}\lambda\mathbf{E}\Phi_{[0,1],\lambda}(W)=\lim_{\lambda\downarrow0}\lambda^{2}\mathbf{E}\Phi_{[0,1/\lambda^{2}],1}(W)=\lim_{b\to\infty}\frac{1}{b}\mathbf{E}\Phi_{[0,b]}(W)=1.\qedhere
\]

\end{proof}

\section{\label{sec:error-anal}Error analysis for fixed \texorpdfstring{$\lambda$}{λ}}

Now that we have established the asymptotic behavior (as $\lambda\to0$)
of $\mathbf{E}\Phi_{[0,1],\lambda}(W)$, we turn to estimating the
error term. We use a simple argument based on subdividing intervals
and applying Brownian scaling.
\begin{prop}
\label{prop:crude-split}Let $f\in C^{0}[a,b]$. For any $a<b<c$,
we have $\Phi_{[a,b],\lambda}(f)+\Phi_{[b,c],\lambda}(f)-\lambda\le\Phi_{[a,c],\lambda}(f)\le\Phi_{[a,b],\lambda}(f)+\Phi_{[b,c],\lambda}(f)$.\end{prop}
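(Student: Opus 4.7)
The plan is to prove each inequality by direct manipulation of partitions, using Corollary~\ref{cor:exists-opt} to guarantee the existence of optimizers on each subinterval.

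For the lower bound, I would take partitions $P_1=[a=s_0<s_1<\cdots<s_{k_1+1}=b]$ and $P_2=[b=u_0<u_1<\cdots<u_{k_2+1}=c]$ achieving $\Phi_{[a,b],\lambda}(f)$ and $\Phi_{[b,c],\lambda}(f)$ respectively. Concatenating them yields a partition $P$ of $[a,c]$ whose interior points are $s_1,\dots,s_{k_1},b,u_1,\dots,u_{k_2}$, so $|P|=k_1+k_2+1$. A direct computation gives
\[
\Phi_{[a,c],\lambda,P}(f)=\Phi_{[a,b],\lambda,P_1}(f)+\Phi_{[b,c],\lambda,P_2}(f)-\lambda,
\]
where the single $-\lambda$ term records the penalty associated with the newly-introduced interior point $b$. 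The lower bound then follows from $\Phi_{[a,c],\lambda}(f)\ge\Phi_{[a,c],\lambda,P}(f)$.

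For the upper bound, I would take an optimal partition $P=[a=t_0<\cdots<t_{k+1}=c]$ for $\Phi_{[a,c],\lambda}(f)$ and split it at $b$, inserting $b$ into $P$ if it is not already present. If $b=t_j$ for some $1\le j\le k$, then $P_1=[t_0<\cdots<t_j]$ and $P_2=[t_j<\cdots<t_{k+1}]$ have $|P_1|+|P_2|=k-1$, and bookkeeping gives $\Phi_{[a,b],\lambda,P_1}(f)+\Phi_{[b,c],\lambda,P_2}(f)=\Phi_{[a,c],\lambda,P}(f)+\lambda$. If instead $t_j<b<t_{j+1}$, then splitting after inserting $b$ produces partitions with $|P_1|+|P_2|=k$ and the triangle inequality gives $|f(b)-f(t_j)|+|f(t_{j+1})-f(b)|\ge|f(t_{j+1})-f(t_j)|$, so
\[
\Phi_{[a,b],\lambda,P_1}(f)+\Phi_{[b,c],\lambda,P_2}(f)\ge\Phi_{[a,c],\lambda,P}(f)=\Phi_{[a,c],\lambda}(f).
\]
In both cases the desired upper bound holds (the first case in fact yields a slightly stronger bound), since $\Phi_{[a,b],\lambda}(f)\ge\Phi_{[a,b],\lambda,P_1}(f)$ and similarly for $P_2$.

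There is no real obstacle here: the entire argument is index bookkeeping plus one application of the triangle inequality, and the $-\lambda$ gap in the lower bound is precisely the cost of stitching the optimal interior point $b$ into the combined partition, while the upper bound uses only that any partition of $[a,c]$ can be refined to one containing $b$ without losing too much.
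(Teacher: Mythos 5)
Your proof is correct and follows essentially the same approach as the paper: concatenate the two optimal subpartitions for the lower bound (accounting for the extra $-\lambda$ from the new interior point $b$), and split an optimal $[a,c]$-partition at $b$ with the triangle inequality for the upper bound. The paper phrases this as a direct manipulation of nested maxima rather than invoking explicit optimizers and a case split on whether $b\in P$, but the underlying bookkeeping is identical.
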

\begin{proof}
We compute 
\begin{align*}
\Phi_{[a,c],\lambda}(f) & =\max\limits _{k\ge0}\max\limits _{a=t_{0}<\cdots<t_{k+1}=c}\left(\sum\limits _{i=1}^{k+1}|f(t_{i})-f(t_{i-1})|-\lambda k\right)\\
 & \ge\max\limits _{k\ge1}\max\limits _{1\le\ell\le k}\max\limits _{\substack{a=t_{0}<\cdots<t_{\ell}=b\\
b=t_{\ell}<\cdots<t_{k+1}=c
}
}\left(\sum\limits _{i=1}^{k+1}|f(t_{i})-f(t_{i-1})|-\lambda k\right)\\
 & =\max\limits _{k\ge1}\max\limits _{1\le\ell\le k}\max\limits _{\substack{a=t_{0}<\cdots<t_{\ell}=b\\
b=t_{\ell}<t_{\ell+2}<\cdots<t_{k+1}=c
}
}\left(\sum\limits _{i=1}^{\ell}|f(t_{i})-f(t_{i-1})|-\lambda(\ell-1)-\lambda+\sum\limits _{i=\ell+1}^{k+1}|f(t_{i})-f(t_{i-1})|-\lambda(k-\ell)\right)\\
 & =\max\limits _{\ell'\ge0}\max\limits _{a=t_{0}<\cdots<t_{\ell'+1}=b}\left(\sum\limits _{i=1}^{\ell}|f(t_{i})-f(t_{i-1})|-\lambda(\ell-1)\right)\\
 & \qquad\qquad+\max\limits _{m\ge0}\max\limits _{b=t_{0}<\cdots<t_{m+1}=c}\left(\sum\limits _{i=0}^{m+1}|f(t_{i})-f(t_{i-1})|-\lambda m\right)-\lambda\\
 & =\Phi_{[a,b],\lambda}(f)+\Phi_{[b,c],\lambda}(f)-\lambda.
\end{align*}
Moreover, 
\begin{align*}
\mathrlap{\Phi_{[a,c],\lambda}(f)=}\quad\\
 & =\max\limits _{k\ge0}\max\limits _{a=t_{0}<\cdots<t_{k+1}=c}\left(\sum\limits _{i=1}^{k+1}|f(t_{i})-f(t_{i-1})|-\lambda k\right)\\
 & =\max_{\substack{k\ge0\\
1\le\ell\le k
}
}\max_{\substack{a=t_{0}<\cdots<t_{\ell}<b\\
b<t_{\ell+1}<\cdots<t_{k+1}=c
}
}\left(\sum\limits _{i=1}^{\ell}|f(t_{i})-f(t_{i-1})|+|f(b)-f(t_{\ell})|+|f(t_{\ell+1})-f(b)|+\sum\limits _{i=\ell+2}^{k}|f(t_{i})-f(t_{i-1})|-\lambda k\right)\\
 & \le\max_{\substack{k\ge0\\
1\le\ell\le k
}
}\max_{\substack{a=t_{0}<\cdots<t_{\ell}<b\\
b<t_{\ell+1}<\cdots<t_{k+1}=c
}
}\Bigg(\left[\sum\limits _{i=1}^{\ell}|f(t_{i})-f(t_{i-1})|+|f(b)-f(t_{\ell})|-\lambda\ell\right]\\
 & \qquad\qquad\qquad\qquad\qquad\qquad\qquad+\left[|f(t_{\ell+1})-f(b)|+\sum\limits _{i=\ell+2}^{k+1}|f(t_{i})-f(t_{i-1})|-\lambda(k-\ell)\right]\Bigg)\\
 & =\Phi_{[a,b],\lambda}(f)+\Phi_{[b,c],\lambda}(f).
\end{align*}
In summary, we have $\Phi_{[a,b],\lambda}(f)+\Phi_{[b,c],\lambda}(f)-\lambda\le\Phi_{[a,c],\lambda}(f)\le\Phi_{[a,b],\lambda}(f)+\Phi_{[b,c],\lambda}(f)$.\end{proof}
\begin{prop}
\label{prop:splitscale}If $\{W_{t}\}_{t\ge0}$ is a standard Brownian
motion, then for any $L\in\mathbf{N}$ we have 
\[
\mathbf{E}\Phi_{[0,b],\lambda/\sqrt{L}}(W)=\sqrt{L}(\mathbf{E}\Phi_{[0,b],\lambda}(W)-\varepsilon_{\lambda,L}),
\]
with $0\le\varepsilon_{\lambda,L}\le\lambda$.\end{prop}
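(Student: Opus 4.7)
The plan is to combine the exact Brownian scaling identity of Proposition~\ref{prop:scaling} with the pathwise subadditivity of Proposition~\ref{prop:crude-split}. Applying Proposition~\ref{prop:scaling} with $\mu=L$ gives the exact relation
\[
\mathbf{E}\Phi_{[0,Lb],\lambda}(W) = \sqrt{L}\,\mathbf{E}\Phi_{[0,b],\lambda/\sqrt{L}}(W),
\]
so it will be enough to establish the sandwich
\[
L\,\mathbf{E}\Phi_{[0,b],\lambda}(W) - (L-1)\lambda \le \mathbf{E}\Phi_{[0,Lb],\lambda}(W) \le L\,\mathbf{E}\Phi_{[0,b],\lambda}(W).
\]
Defining $\varepsilon_{\lambda,L} := \mathbf{E}\Phi_{[0,b],\lambda}(W) - \frac{1}{\sqrt{L}}\mathbf{E}\Phi_{[0,b],\lambda/\sqrt{L}}(W)$ and dividing the sandwich through by $\sqrt{L}$ will then yield $0 \le \varepsilon_{\lambda,L} \le \frac{L-1}{L}\lambda \le \lambda$, as desired.

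To prove the sandwich, I would apply Proposition~\ref{prop:crude-split} iteratively with the $L-1$ splitting points $b, 2b, \dots, (L-1)b$, which gives, pathwise,
\[
\sum_{j=1}^{L}\Phi_{[(j-1)b,jb],\lambda}(W) - (L-1)\lambda \le \Phi_{[0,Lb],\lambda}(W) \le \sum_{j=1}^{L}\Phi_{[(j-1)b,jb],\lambda}(W).
\]
Taking expectations reduces the problem to showing that $\mathbf{E}\Phi_{[(j-1)b,jb],\lambda}(W) = \mathbf{E}\Phi_{[0,b],\lambda}(W)$ for each $j$. This identity follows from two observations: first, the functional $\Phi_{[a,b],\lambda}$ is invariant under adding a constant to its argument, since it depends only on the differences $f(t_i)-f(t_{i-1})$; and second, by the Markov property of Brownian motion, the shifted process $\tilde W^{(j)}_t := W_{t+(j-1)b} - W_{(j-1)b}$ is again a standard Brownian motion on $[0,b]$. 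Hence $\Phi_{[(j-1)b,jb],\lambda}(W) = \Phi_{[0,b],\lambda}(\tilde W^{(j)})$ has the same law as $\Phi_{[0,b],\lambda}(W)$.

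There is no genuine obstacle here; both of the heavy-lifting inputs have already been proved, and the argument simply interleaves them. The only point requiring a line of care is the distributional identification of each slice with the original, which comes directly from translation invariance of $\Phi$ in its function argument combined with Brownian stationarity.
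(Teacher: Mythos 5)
Your proof is correct and takes essentially the same route as the paper: iterate Proposition~\ref{prop:crude-split} across the equally spaced subdivision of $[0,Lb]$, use the Markov property of Brownian motion together with the translation invariance of $\Phi$ to identify each slice's expectation with $\mathbf{E}\Phi_{[0,b],\lambda}(W)$, then apply Proposition~\ref{prop:scaling}. The only differences are cosmetic (you spell out the distributional identification and record the slightly sharper bound $\varepsilon_{\lambda,L}\le\frac{L-1}{L}\lambda$); neither changes the substance.
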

\begin{proof}
By inductively applying Proposition~\ref{prop:crude-split} and the
Markov property of Brownian motion, for any $L\in\mathbf{N}$ we get
the inequality 
\[
L(\mathbf{E}\Phi_{[0,b],\lambda}(W)-\lambda)\le L\mathbf{E}\Phi_{[0,b],\lambda}(W)-(L-1)\lambda\le\mathbf{E}\Phi_{[0,Lb],\lambda}(W)\le L\mathbf{E}\Phi_{[0,b],\lambda}(W),
\]
so we have
\[
\mathbf{E}\Phi_{[0,Lb],\lambda}(W)=L(\mathbf{E}\Phi_{[0,b],\lambda}(W)-\varepsilon_{\lambda,L}),
\]
for some $0\le\varepsilon_{\lambda,L}\le\lambda$. By Proposition~\ref{prop:scaling},
we have $\mathbf{E}\Phi_{[0,Lb]}(W)=\sqrt{L}\mathbf{E}\Phi_{[0,b],\lambda/\sqrt{L}}(W),$
so the result follows.
\end{proof}

Our final proposition implies Theorem~\ref{thm:main-thm}.
\begin{prop}
With notation as in Proposition~\ref{prop:splitscale}, we have 
\[
\mathbf{E}\Phi_{[0,1],\lambda}(W)=\frac{1}{\lambda}+\alpha_{\lambda},
\]
where $\alpha_{\lambda}=\lim\limits _{r\to\infty}\varepsilon_{\lambda,2^{r}}$
(and thus $0\le\alpha_{\lambda}\le\lambda$).\end{prop}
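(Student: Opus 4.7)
The plan is to combine Proposition~\ref{prop:splitscale} (the sub-additive splitting) with Corollary~\ref{cor:limit} (the $\lambda\downarrow 0$ asymptotic) along a geometric sequence of values $L = 2^r$. Specifically, apply Proposition~\ref{prop:splitscale} with $b=1$ and $L=2^r$, and rearrange to obtain
\[
\mathbf{E}\Phi_{[0,1],\lambda}(W) - \varepsilon_{\lambda,2^r} = 2^{-r/2}\,\mathbf{E}\Phi_{[0,1],\lambda/2^{r/2}}(W).
\]
The left-hand side depends on $r$ only through $\varepsilon_{\lambda,2^r}$, while the right-hand side is an expression whose limit is easy to read off.

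Next I would use Corollary~\ref{cor:limit}, which says $\mathbf{E}\Phi_{[0,1],\mu}(W)\sim 1/\mu$ as $\mu\downarrow 0$. Substituting $\mu=\lambda/2^{r/2}$ gives
\[
2^{-r/2}\,\mathbf{E}\Phi_{[0,1],\lambda/2^{r/2}}(W) \;\longrightarrow\; \frac{1}{\lambda} \qquad\text{as }r\to\infty.
\]
Therefore the left-hand side also converges, which forces $\varepsilon_{\lambda,2^r}$ to converge. Define $\alpha_\lambda := \lim_{r\to\infty}\varepsilon_{\lambda,2^r}$; taking $r\to\infty$ in the displayed identity yields $\mathbf{E}\Phi_{[0,1],\lambda}(W) - \alpha_\lambda = 1/\lambda$, i.e., $\mathbf{E}\Phi_{[0,1],\lambda}(W) = 1/\lambda + \alpha_\lambda$.

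Finally, the bound $0\le\alpha_\lambda\le\lambda$ is inherited: Proposition~\ref{prop:splitscale} gives $0\le\varepsilon_{\lambda,L}\le\lambda$ uniformly in $L$, so the limit satisfies the same bounds. This last step, combined with the identity above, is precisely the content of Theorem~\ref{thm:main-thm}. There is no real obstacle: the only subtle point is recognizing that one needs a sequence $L_r\to\infty$ with $\lambda/\sqrt{L_r}\to 0$ in order to feed Corollary~\ref{cor:limit} into Proposition~\ref{prop:splitscale}, and the choice $L_r=2^r$ (or any other sequence tending to $\infty$) suffices.
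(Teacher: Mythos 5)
Your proof is correct, and it takes a genuinely cleaner route than the paper's. The paper first establishes that the sequence $\{\varepsilon_{\lambda,2^r}\}_r$ is nondecreasing: it applies the recurrence $\xi(\lambda/\sqrt{L})=\sqrt{L}\,\xi(\lambda)-\sqrt{L}\,\varepsilon_{\lambda,L}$ three times (with $L=2^{r+1}$, $L=2$, and $L=2^r$), chases terms to derive $\zeta^{r}\varepsilon_{\lambda,2^{r+1}}=\zeta^{r}\varepsilon_{\lambda,2^{r}}+\varepsilon_{\lambda/\zeta^{r},2}$, and concludes $\varepsilon_{\lambda,2^{r+1}}\ge\varepsilon_{\lambda,2^{r}}$; together with the uniform bound $\varepsilon_{\lambda,L}\le\lambda$, monotone convergence then gives existence of $\alpha_\lambda$, after which Corollary~\ref{cor:limit} pins down its value. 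You instead observe that a single application of Proposition~\ref{prop:splitscale} yields $\mathbf{E}\Phi_{[0,1],\lambda}(W)-\varepsilon_{\lambda,2^r}=2^{-r/2}\,\mathbf{E}\Phi_{[0,1],\lambda/2^{r/2}}(W)$, and Corollary~\ref{cor:limit} forces the right-hand side to converge to $1/\lambda$; the convergence of $\varepsilon_{\lambda,2^r}$ then comes for free, with no need for monotonicity. This is a legitimate simplification --- it sacrifices only the side-information that the sequence is monotone, which the statement of the proposition doesn't require. The bound $0\le\alpha_\lambda\le\lambda$ is inherited just as you say, since $0\le\varepsilon_{\lambda,2^r}\le\lambda$ for every $r$ and the limit preserves these inequalities.
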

\begin{proof}
For ease of notation, we now put $\xi(\lambda)=\mathbf{E}\Phi_{[0,1],\lambda}(f)$,
where $f(t)=W_{t}$, a standard Brownian motion. Note that $\xi$
is decreasing. With this notation, and putting $b=1$, the previous
Proposition tells us that 
\begin{equation}
\xi(\lambda/\sqrt{L})=\sqrt{L}\xi(\lambda)-\sqrt{L}\varepsilon_{\lambda,L}.\label{eq:recur}
\end{equation}
 %

For typographical convenience, put $\zeta=\sqrt{2}$. Applying (\ref{eq:recur})
three times, with $L=2^{r+1}$, $L=2$, and $L=2^{r}$, we obtain
\begin{multline*}
\zeta^{r+1}\xi(\lambda)-\zeta^{r+1}\varepsilon_{\lambda,2^{r+1}}=\xi(\lambda/\zeta^{r+1})=\zeta\xi(\lambda/\zeta^{r})-\zeta\varepsilon_{\lambda/\zeta^{r},2}=\zeta\left[\zeta^{r}\xi(\lambda)-\zeta^{r}\varepsilon_{\lambda,2{}^{r}}\right]-\zeta\varepsilon_{\lambda/\zeta^{r},2}\\
=\zeta^{r+1}\xi(\lambda)-\zeta^{r+1}\varepsilon_{\lambda,2^{r}}-\zeta\varepsilon_{\lambda/\zeta^{r},2},
\end{multline*}
so
\[
\zeta^{r}\varepsilon_{\lambda,2^{r+1}}=\zeta^{r}\varepsilon_{\lambda,2^{r}}+\varepsilon_{\lambda/\zeta^{r},2},
\]
so in particular 
\[
\varepsilon_{\lambda,2^{r+1}}\ge\varepsilon_{\lambda,2^{r}}
\]
since $\varepsilon_{\lambda/\zeta^{r},2}\ge0$. Thus for fixed $\lambda$,
the sequence $\{\varepsilon_{\lambda,2^{r}}\}_{r}$ is nondecreasing
and bounded above by $\lambda$. Thus $\alpha_{\lambda}=\lim\limits _{r\to\infty}\varepsilon_{\lambda,2^{r}}$
exists and lies in $[0,\lambda]$. From (\ref{eq:recur}) again, we
have
\[
\xi(\lambda)=\varepsilon_{\lambda,2^{r}}+\frac{1}{\zeta^{r}}\xi(\lambda/\zeta^{r}),
\]
so we can conclude, using Corollary~\ref{cor:limit}, that 
\[
1=\lim\limits _{r\to\infty}\frac{\lambda}{\zeta^{r}}\xi(\lambda/\zeta^{r})=\lambda\xi(\lambda)-\lambda\alpha_{\lambda},
\]
or $\xi(\lambda)=1/\lambda+\alpha_{\lambda}$, as claimed.
\end{proof}

\section*{Acknowledgments}

The author wishes to thank Jian Ding for suggesting the problem and
for extensive advice and comments on the manuscript. Much of this
work was done while the author was an undergraduate student at the
University of Chicago. Final preparation of the manuscript was done
while the author was supported by an NSF Graduate Research Fellowship.

\bibliographystyle{plain}
\bibliography{optimizeBM}

\end{document}